\newtheorem{thm}{Theorem}[section]
\newtheorem{defi}[thm]{Definition}
\newtheorem{lem}[thm]{Lemma}
\newtheorem{prop}[thm]{Proposition}
\numberwithin{equation}{section}
\begin{document}

\title{\textbf{The flow of two falling balls mixes rapidly}}
\author{\textbf{P\'eter B\'alint$^{1,2}$, Andr\'as N\'emedy Varga$^{3}$}
\bigskip\\
1: MTA-BME Stochastics Research Group\\
Budapest University of Technology and Economics\\
Egry J\'ozsef u. 1, H-1111 Budapest, Hungary
\medskip \\and \medskip \\
2: Department of Stochastics, Institute of Mathematics,\\
Budapest University of Technology and Economics\\
Egry J\'ozsef u. 1, H-1111 Budapest, Hungary
\medskip \\and \medskip \\
3: MTA Alfr\'ed R\'enyi Institute of Mathematics\\
Re\'altanoda u. 13-15, 1053 Budapest, Hungary
\medskip \\
\texttt{pet@math.bme.hu,nemedy@math.bme.hu}\\
}
\date{\today}
\maketitle

\begin{abstract}
In this paper we study the system of two falling balls in continuous time. We modell the system by a suspension flow over a two dimensional, hyperbolic base map. By detailed analysis of the geometry of the system we identify special periodic points and show that the ratio of certain periods in continuous time is Diophantine for almost every value of the mass parameter in an interval. Using results of Melbourne (\cite{M}) and our previous achievements \cite{BBNV} we conclude that for these values of the parameter the flow mixes faster than any polynomial. Even though the calculations are presented for the specific physical system, the method is quite general and can be applied to other suspension flows, too.
\end{abstract}

\section*{Introduction}

One of the main motivations for studying the statistical properties of hyperbolic dynamical systems is related to applications in physics. Yet, models with direct physics relevance are typically complicated, and thus results concerning such systems are quite limited. A remarkable exception is the class of dispersing billiards, see \cite{CM} for a detailed description of their theory.

The system of falling balls investigated in the present paper cannot be regarded as a small perturbation of a dispersing billiard, especially as far as the dynamics in continuous time is concerned. This model introduced by Wojtkowski in \cite{W} describes the motion of $n$ point masses along a vertical half-line under the action of gravity, which collide elastically with each other and the floor. This system can be considered both in discrete and in continuous time. The first results were on hyperbolicity, i.e. non-vanishing of all relevant Lyapunov exponents. In \cite{W} Wojtkowski proved hyperbolicity in case the masses of the particles are strictly decreasing up the line. Later Sim\'anyi weakened this assumption, in \cite{S} he showed hyperbolicity, when the masses of the particles decrease (but not necessarily strictly) up the line and there are at least two different masses. Aiming at finer chaotic properties, in \cite{LW} Wojtkowski and Liverani developed a general method to show ergodicity for Hamiltonian systems and this could be used to show ergodicity of the system of \emph{two} falling balls, in case the lower ball is heavier. Earlier Chernov showed a similar result in \cite{C1}. However, for three or more particles ergodicity is still an open question. In the ergodic regime of the two falling balls a detailed geometric description of the system and a quantitative analysis of the \textit{discrete time} map lead to the verification of fine statistical properties in \cite{BBNV}, in particular polynomial decay of correlations and the central limit theorem for H\"older observables.

In this paper we investigate the system of two falling balls in \textit{continuous time}. Studying statistical properties of hyperbolic flows is a technically involved task which is mostly related to the lack of hyperbolic behavior in the flow direction. In the past two decades there has been substantial activity in this field, here we summarize some results that are closely related to our work. An essential breakthrough was initiated by Chernov in \cite{C2}, where, using Markov approximations, he obtained a stretched exponential bound on time correlations for $3$-dimensional Anosov flows that verify the so called `\textit{uniform nonintegrability of foliations}' condition (UNI for short). Dolgopyat developed Chernov's result in two different directions. On the one hand, in \cite{D1} he showed that Anosov flows satisfying the UNI condition and a high degree of regularity are exponentially mixing. In his later work he studied the more general class of suspension flows over subshifts of finite type and in \cite{D} he proved that such flows are typically rapid mixing, meaning that time correlations for sufficiently regular observables decay faster than any polynomial. Here the typicality condition is related to the presence of two periodic orbits such that the ratio of their periods satisfies a Diophantine condition. This condition plays an important role in our paper, thus we introduce the abbreviation DPO (Diophantine periodic orbits) for later reference. Dolgopyat's result on rapid mixing was extended by Melbourne (\cite{M}) to suspensions over Gibbs-Markov maps and also to suspensions over hyperbolic maps that can be modelled by a Young-tower (\cite{Y1},\cite{Y2}) with exponential tails.

The DPO condition is a much weaker form of non-integrability in the flow direction than the UNI condition. Yet, in most applications, the verification of either UNI or DPO is based on the invariance of a canonical contact form or a perturbation thereof (see \cite{L} or \cite{BL} for example).\footnote{In that respect, dispersing billiard flows may be regarded as singular geodesic flows.} Other than that, we are only aware of results that prove DPO -- and hence rapid mixing -- for a class of flows that is residual in an appropriate topology, and not for specific examples. Note that for Hamiltonian flows on cotangent bundles the canonical contact form associated to the
symplectic form is preserved only if the Hamiltonian is a homogeneous function of the momenta (see \cite{KH}, section 5.6). For the flow of two falling balls by the presence of a non-infinitesimal external field the Hamiltonian cannot be regarded as a small perturbation of a homogeneous function, hence we seek for alternative methods.

In the present paper we prove DPO for the system of two falling balls, for almost every value of the mass ratio within a large interval (in the ergodic regime). To conclude that the system mixes rapidly in continuous time, we rely on \cite{M}. This requires some additional work, as the periodic points originally constructed are a macroscopic distance apart, while for \cite{M} it is essential that they are present on the base of the same Young tower. To obtain periodic points that can be realized on the base of the same Young tower, we apply a shadowing type argument, the details of which require most of the technical work in this paper. Similar ideas have already appeared in the literature, see in particular the notion of ``periodic points with good asymptotics'' in \cite{FMT}. Nonetheless, in \cite{FMT} good asymptotics is used to conclude stability of mixing and rapid mixing in the $C^r$ topology, while here we implement a shadowing type argument for the specific system of falling balls.

As a consequence of our analysis we conclude that the system of two falling balls mixes rapidly in continuous time, for a set of mass ratios that has positive Lebesgue measure (cf. Theorem \ref{aediophantine}). It is worth pointing out that our analysis applies for almost every value of the mass ratio, as long as the mass of the lower ball is at least one and a half times larger than the mass of the upper ball. The only reason why we have to restrict to a smaller set of mass ratios is that in \cite{BBNV} the presence of a Young tower is established only for a smaller, yet open set of mass ratios. See our remarks after Formula \eqref{ratioextended} for further discussion.

The rest of this paper is organized as follows. In section~\ref{s:setup} we summarize the necessary prerequisites concerning the system of two falling falling balls, mostly from \cite{BBNV}. In section~\ref{s:results}
we discuss suspension flows and state our main results. Section~\ref{calculations} contains the core argument of the paper, the construction of the periodic points satisfying DPO, along with the shadowing type argument, for mass ratios $m\in(\frac23,\frac34)$. Finally, section~\ref{s:extension} discusses the extension to other values of the mass ratio.

\section{Setup and notations \label{s:setup}}

In this section we introduce the system and recall the necessary notations and results from our earlier paper \cite{BBNV}. The exposition is self contained, for further details about the dynamics and its properties we refer to our previous work.

The system of two falling balls, introduced by Wojtkowski in \cite{W}, describes the motion of two point particles of masses $m_1$ and $m_2$ that move along the vertical half-line, subject to constant gravitational force, and collide elastically with each other and the floor. We consider the case when the lower ball is heavier (i.e. $m_1>m_2$), which corresponds to ergodic and hyperbolic dynamics (as shown in \cite{LW} and \cite{W}). As the action of ball to ball collisions depends only on the ratio of the two masses we rescale these masses such that $m_1+m_2=1$. We introduce our mass parameter $m$ and from now on we use the notation that $m_1 = m$, $m_2=1-m$, where $m \in (1/2,1)$ since we are in the ergodic case.

\begin{figure}[h!]
\centering
\includegraphics[scale=0.6]{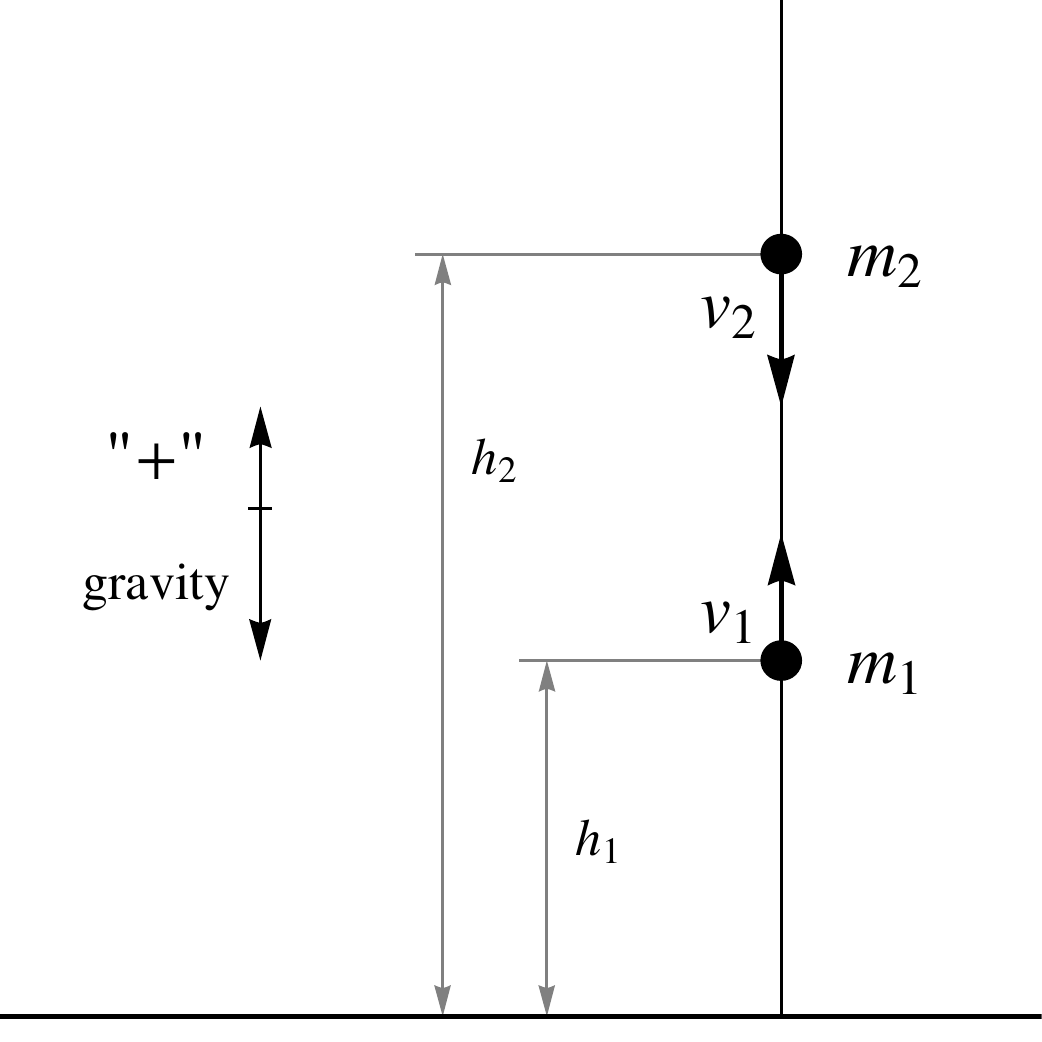}
\caption{The system of two falling balls}
\label{sima}
\end{figure}

We neglect air resistance and assume all collisions to be totally elastic, therefore the flow preserves the total energy of the system, which we set to be $1/2$ for practical reasons. We discretize time by considering the outgoing Poincar\'e section corresponding to moments when the lower ball hits the floor and the next collision will occur between the two balls (and not between the lower one and the floor). Based on the work of Wojtkowski we use the coordinates
\[ h = \frac12 m v_1^2 \quad \text{and} \quad z = v_2 - v_1, \]
to describe the system, where $v_i$ is the signed velocity of the $i$'th ball. This means that $h$ is the energy of the lower ball (since it is on the floor it only has kinetic energy) and $z$ is the difference of the velocities. Note that these coordinates are invariant between collisions. The phase space is then

\begin{equation*}
\begin{split}
\mathcal{M}_1:=\Bigl\{(h, z)\in\mathbb{R}^2 |\, 0 < h < 1/2, \, 1/2-h > \frac{1}{2} (1-m) \biggl(z + \sqrt{\frac{2h}{m}}\biggr)^2,\\ \, m(1-m) z \biggl(2 \sqrt{\frac{2 h}{m}}-z\biggr)-2 h + m < 0 \Bigr\},
\end{split}
\end{equation*}
where the conditions arise as follows.

\begin{enumerate}
	\item The first condition says that the energy of the lower ball is positive, but not greater than the total energy of the system, which we set to $1/2$ previously.
	\item The second condition is the inequality that implies that the upper ball has positive height.
	\item Finally, the third condition is to ensure that the two balls will collide before the lower ball returns to the floor.
\end{enumerate}
Recall that our Poincar\'e section $\mathcal{M}_1$ corresponds to situations in which
\begin{enumerate}
    \item the lower ball is on the floor and
    \item it will collide with the upper one before returning to the floor.
\end{enumerate}
Hence, starting from a configuration $(h,z) \in \mathcal{M}_1$, first the two balls collide, and then the lower one will hit the floor several times before getting back to $\mathcal{M}_1$. Let us denote by $R(h,z)$ the number of bumps of the lower ball on the floor before returning to $\mathcal{M}_1$, starting from the configuration $(h,z)$. Then for any $n \in \mathbb{N}$ we introduce
\[ R_n := \{ (h,z) \in \mathcal{M}_1 | R(h,z) = n \}. \]
It is shown in \cite{BBNV} that none of these sets are empty. They are of course disjoint, moreover, even the closures of any two of them are disjoint provided their indices differ by more than $1$. This way, in fact,
the domains of continuity for the dynamics are identified, which provide a partition of the phase space $\mathcal{M}_1$.

\begin{figure}[h!]
\centering
\includegraphics[scale=0.4]{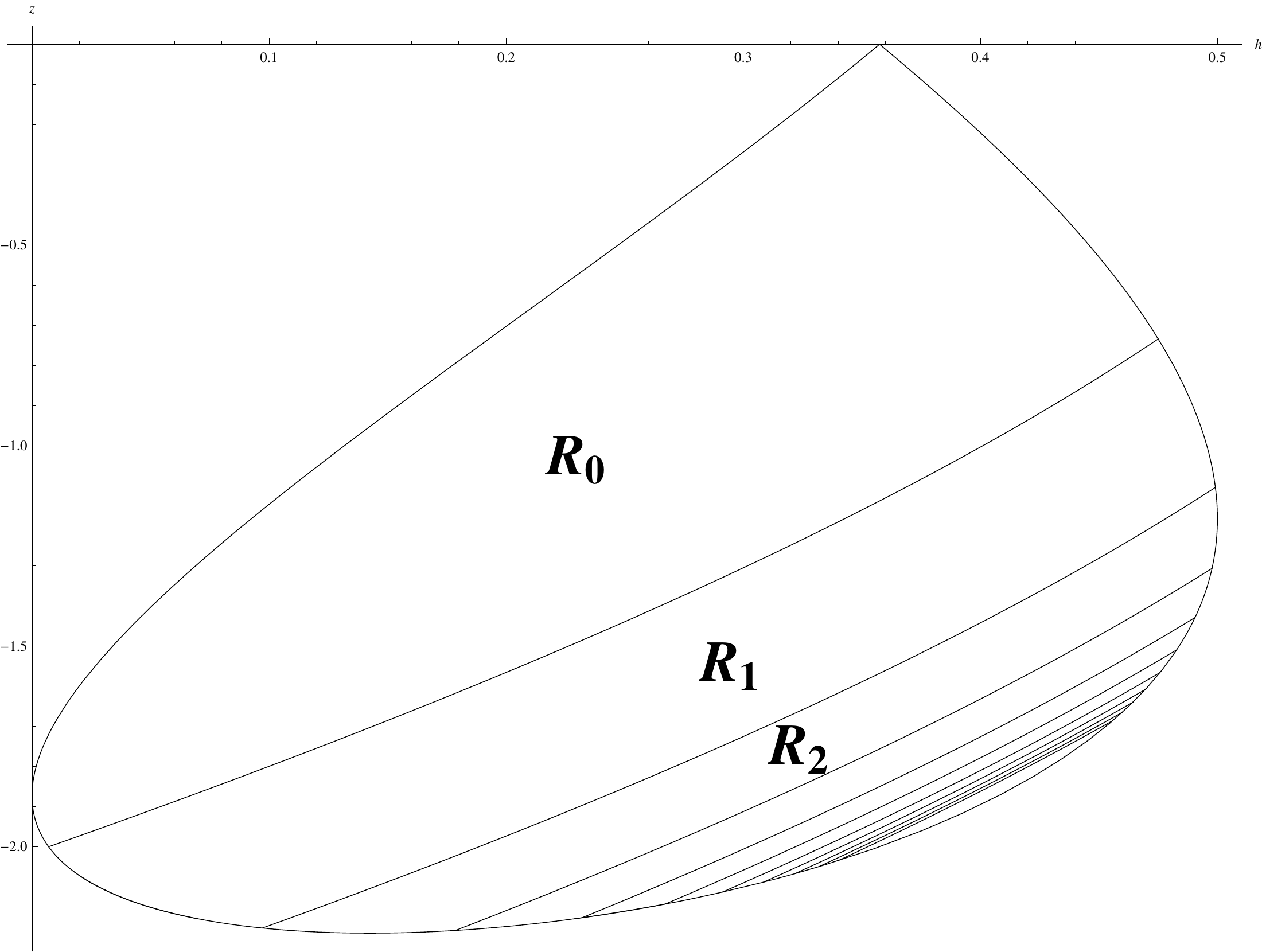}
\caption{The phase space of the dynamics}
\label{phasespace}
\end{figure}

Indeed, let us denote the dynamics of the system by $T : \mathcal{M}_1 \to \mathcal{M}_1$. Then using the classical Newtonian laws of mechanics one can calculate that

\begin{equation}\label{TonRn}
T(h,z) = (m F , -(2n+2) \sqrt{2F} - z) \quad \text{for } (h,z) \in R_n,
\end{equation}
where

\begin{equation}\label{Fandalpha}
F = 1 - h/m + \alpha z^2, \quad \text{and } \, \alpha = 1-3m+2m^2.
\end{equation}
The latter quantities often occur in the formulas, that is why we introduced extra notation on them. It can be seen from $\eqref{TonRn}$ that $T$ is continuous, moreover $C^2$ on each $R_n$, but the crucial dependence of $T$ on $n$ shows that $T$ is discontinuous on $\cup_{n=0}^{\infty} \partial R_n$. The discontinuities occur at the curves

\begin{equation}\label{discontinuity}
r_n := \partial R_n \cap \partial R_{n+1}.
\end{equation}
These  correspond to configurations starting from which the two balls collide, then the lower one hits the floor $n$ times and finally \textit{they land on the ground at the same time, on top of each other}. Hence it is not clear whether the lower one reached the floor before their collision, or if it was the other way around. The two possible cases correspond to two different limits, one where the initial point $(h,z) \in r_n$ is approached from inside $R_{n+1}$ and the other where it is approached from inside $R_n$.
\\As we pointed out $T$ maps each $R_n$ diffeomorphically onto its image. The jacobian of the dynamics is
\begin{equation}\label{jacobian}
DT|_{R_n}(h,z) = \left( \begin{array}{cc}
                         -1                                                  & 2 m \alpha z \\
                         \frac{\sqrt{2}(n+1)}{m \sqrt{F}} & -1-\frac{(2n+2)\sqrt{2} \alpha z}{\sqrt{F}}
                       \end{array} \right).
\end{equation}
An important consequence of this formula is that $det(DT(h,z)) = 1$ and hence the normalized Lebesgue measure on $\mathcal{M}_1$ is an absolutely continuous invariant probability measure of the dynamics, which, by ergodicity, is unique. It can also be derived that $DT(h,z)$ is a hyperbolic matrix at every point and that it contains a rotation by $180$ degrees.
\\Figure \ref{imageofstripes} demonstrates how $T$ maps the set $R_n$ onto its image. For further details about the regularity properties of $T$ \cite{BBNV}, section 3 is referred. The important fact that we will use in this paper is that $T$ is uniformly hyperbolic (proved in \cite{BBNV} subsection 3.3), in the sense that there exists a forward invariant unstable, and a backward invariant stable cone field ($C_x^u$ and $C_x^s$ respectively) and these cone fields are uniformly transversal to each other. Curves $\gamma$ such that the tangent line $T_x\gamma$ lies in the unstable cone $C_x^u$ for every $x \in \gamma$, are referred to as unstable curves. Stable curves are defined in an analogous way. Again in \cite{BBNV} it is shown that stable curves are increasing, while unstable curves are decreasing in the $(h,z)$ coordinates.

\begin{figure}[h!]
\centering
\includegraphics[scale=0.6]{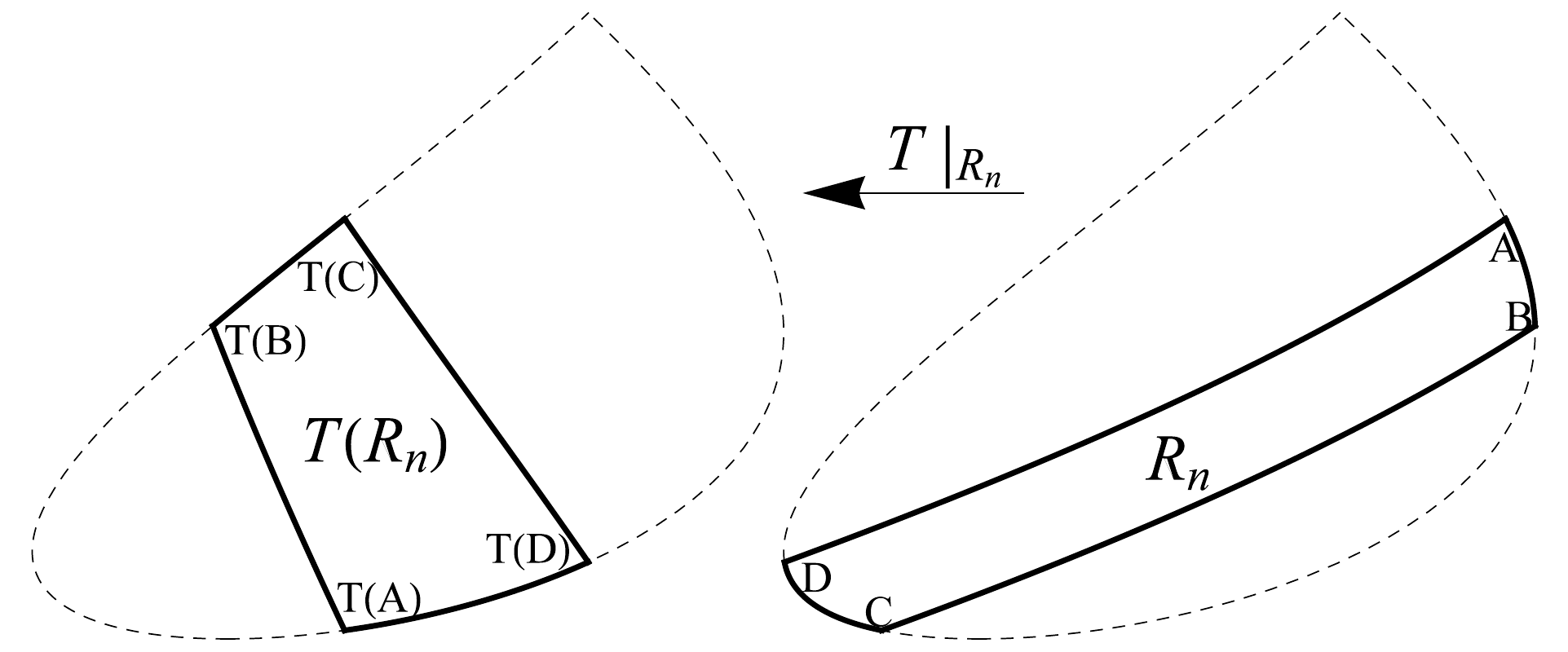}
\caption{The action of the dynamics on $R_n$}
\label{imageofstripes}
\end{figure}

Concerning terminology it is important to distinguish stable manifolds as special stable curves. The stable manifold of a point $x\in\mathcal{M}_1$ is defined as the curve $W^s(x)$ such that for $y\in W^s(x)$ we have $d(T^nx,T^ny)\to 0$ exponentially as $n\to\infty$. Equivalently, $W^s$ is a stable manifold if $T^nW^s$ is a smooth stable curve for any $n\ge 0$. By the general theory of hyperbolic systems with singularities, a unique stable manifold of positive length passes through almost every $x\in\mathcal{M}_1$ (see eg.~\cite{CM} and references therein). Unstable manifolds are special unstable curves with analogous properties.

In our previous paper we proved the following statements (cf.~\cite{BBNV}, subsection 1.2 and section 5.)

\begin{thm}\label{youngtower}
There exists an open interval $I \subseteq (1/2,1)$ such that for any mass ratio $m \in I$ the discrete time map $T : \mathcal{M}_1 \to \mathcal{M}_1$ can be modelled by a Young-tower with exponential tails.
\end{thm}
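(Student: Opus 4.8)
The plan is to verify the hypotheses of Young's tower construction (\cite{Y1},\cite{Y2}) in the form adapted to two-dimensional hyperbolic maps with singularities, following the template worked out for dispersing billiards (\cite{CM}). Concretely one must produce a \emph{hyperbolic product set} (a ``magnet'') $\Lambda$ built from families of local unstable and stable manifolds, a countable measurable partition $\{\Lambda_i\}$ of $\Lambda$ (mod $0$) and return times $\mathcal{R}_i\in\mathbb{N}$, such that each $T^{\mathcal{R}_i}$ maps $\Lambda_i$ onto a full $u$-subrectangle of $\Lambda$, is uniformly hyperbolic in an adapted metric with bounded distortion along unstable leaves, has absolutely continuous stable holonomies of bounded Jacobian, satisfies $\gcd\{\mathcal{R}_i\}=1$, and --- the decisive point --- $\mathrm{Leb}(\mathcal{R}>n)\le C\theta^{n}$ for some $\theta\in(0,1)$. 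The hyperbolic skeleton is already in place: by \cite{BBNV} the map $T$ carries forward/backward invariant, uniformly transversal cone fields $C^u_x,C^s_x$, hence stable and unstable manifolds of positive length exist Lebesgue-almost everywhere; $T$ is $C^2$ on each cell $R_n$ with Jacobian \eqref{jacobian}; and $\det DT\equiv 1$, so the reference measure is the ergodic $T$-invariant normalized Lebesgue measure.

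The analytic core is a \emph{growth lemma} for unstable curves: if $\gamma$ is an unstable curve and $T^n\gamma=\bigcup_j\gamma_j^{(n)}$ is its decomposition into maximal smooth pieces (with the homogeneity-strip refinement near the singularity set, if needed for distortion), then the fraction of $T^n\gamma$ lying in pieces shorter than a fixed threshold decays exponentially in $n$. Two inputs are required. First, uniform expansion: $DT$ expands vectors in $C^u$ by at least $\Lambda>1$, and on $R_n$ the expansion is in fact of order $n^{2}$ --- the $(2,1)$-entry of \eqref{jacobian} is $\sim n/\sqrt F$, while for the image $T(h,z)=(mF,-(2n+2)\sqrt{2F}-z)$ to stay in the bounded region $\mathcal{M}_1$ one must have $F$ of order $n^{-2}$ on $R_n$ for $n$ large. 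Second, a \emph{one-step expansion} estimate: a sufficiently short unstable curve meets only finitely many of the discontinuity curves $r_n$, and, summing the reciprocals of the minimal expansion factors over the resulting pieces, one obtains $\sum_j\Lambda_j^{-1}<1$; the explicit position and shrinking of the cells $R_n$ (which accumulate near a part of $\partial\mathcal{M}_1$), together with the $n^2$-expansion, keep this sum finite and, after a bounded number of iterates, uniformly small.

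Granted the growth lemma, the tower is assembled in the usual way. Fix a short unstable manifold $\Gamma^u$ through a Lebesgue density point and thicken it into a rectangle $\Lambda$ over a positive-measure Cantor set of stable manifolds crossing it; iterate unstable manifolds forward, so that, by the growth lemma, a definite proportion of any unstable curve eventually carries a smooth piece that $u$-crosses $\Lambda$, and the stable holonomy matches it back into $\Lambda$; let $\mathcal{R}$ be the first time this occurs, which produces the partition $\{\Lambda_i\}$. Bounded distortion of $T^{\mathcal{R}_i}$ along unstable leaves follows from the $C^2$-smoothness of $T$ on each $R_n$, together with the curvature bounds for invariant manifolds and the H\"older continuity of the unstable foliation from \cite{BBNV}, by the standard telescoping argument, the accumulated proximity-to-singularity losses being summable again thanks to the strong expansion in deep cells. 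Absolute continuity of the holonomies with bounded Jacobian, and aperiodicity of the return times (hence mixing of the tower), follow by the usual arguments (\cite{CM}) once transversality and the growth lemma are available, using ergodicity of Lebesgue measure.

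Finally one upgrades the growth lemma to the exponential tail $\mathrm{Leb}(\mathcal{R}>n)\le C\theta^{n}$: a point has large return time only if its unstable manifold stays trapped in short pieces near the singularity set for a long time, an event of exponentially small measure by the growth lemma, provided long excursions into the high-index cells $\bigcup_{k\ge N}R_k$ cannot delay recovery --- but these cells have polynomially small total measure (of order $N^{-2}$) and each single visit contributes an expansion factor $\gtrsim N^{2}$, so such excursions only accelerate recovery. I expect the main obstacle to be exactly the assembly of a \emph{uniform} growth/one-step-expansion estimate: the discontinuity curves $r_n$ of \eqref{discontinuity} accumulate, their curvature and their angles relative to $C^u$ and $C^s$ must be controlled, and the secondary singularities $T^{-k}\!\big(\bigcup_n r_n\big)$ must be shown not to over-fragment unstable curves. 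It is in securing these quantitative bounds with constants independent of the mass ratio that one is led to work on a suitable open interval $I\subseteq(1/2,1)$, where all the required transversality, curvature and expansion estimates hold simultaneously; outside $I$ the available estimates only yield the weaker, polynomial tail control underlying the discrete-time results of \cite{BBNV}.
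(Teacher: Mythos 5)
First, a point of orientation: the paper does not prove Theorem~\ref{youngtower} at all --- it is quoted from \cite{BBNV} (``In our previous paper we proved the following statements, cf.~\cite{BBNV}, subsection 1.2 and section 5''), so there is no in-paper proof to compare against. Your proposal is a faithful outline of the strategy that \cite{BBNV} in fact follows (the dispersing-billiard template of \cite{CM} and \cite{Y1}: hyperbolic product set, growth lemma, coupling/return-time construction, exponential tail). But as a proof it has a genuine gap, and the gap sits exactly where the content of the theorem lies. Every ``soft'' ingredient you list (invariant cones, $C^2$ smoothness on cells, $\det DT=1$, absolute continuity of holonomies, aperiodicity) is standard once the growth lemma is in hand; the theorem is equivalent to the two quantitative inputs you defer: (i) a one-step expansion inequality $\sum_j\Lambda_j^{-1}<1$ with explicit control of how an unstable curve is cut by the accumulating singularities $r_n$, and (ii) a bound on the \emph{complexity} of the iterated singularity set $\bigcup_{k\le n}T^{-k}(\cup_j r_j)$, i.e.\ that fragmentation does not outrun expansion. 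You name these as ``the main obstacle'' and then do not resolve them, so the argument is a roadmap rather than a proof. Note that the paper itself isolates (ii) as the precise missing hypothesis: Theorem~\ref{complexity} states that \emph{subexponential complexity} would yield the tower for every $m\in(1/2,1)$, and Theorem~\ref{assumingcomplexity} is conditional on it. That this is singled out as an unproved assumption for general $m$ shows it is not something the ``usual arguments'' deliver here.

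A second, related inaccuracy is your account of why one restricts to an open interval $I$. It is not that outside $I$ one falls back on ``polynomial tail control''; it is that the complexity/growth estimate of \cite{BBNV} is only verified on a specific open interval (the paper states only that $I$ contains $0.74$, with no quantitative description), and the present paper must later work to reconcile this with the parameter ranges $[\frac{1+k}{2+k},\frac{2k^2+2k-1}{2k^2+2k}]$ where its periodic-orbit construction lives. Your heuristics are plausible but unverified as stated: the claim that $F\sim n^{-2}$ on $R_n$ (hence $n^2$-expansion from the $(2,1)$-entry of \eqref{jacobian}) and that $\bigcup_{k\ge N}R_k$ has measure $O(N^{-2})$ would each need a computation from the explicit cell boundaries \eqref{pointsXBx}--\eqref{pointsIxIbx}, and the assertion that deep-cell excursions ``only accelerate recovery'' ignores that strong expansion near a singularity is exactly what produces many \emph{short} pieces, which is what the growth lemma must beat. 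In short: right blueprint, but the theorem-specific estimates --- the only non-routine part --- are missing.
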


\begin{thm}\label{complexity}
If the system has subexponential complexity for some $m \in (1/2,1)$, then the discrete time map $T : \mathcal{M}_1 \to \mathcal{M}_1$ can be modelled by a Young-tower with exponential tails for this $m \in (1/2,1)$.
\end{thm}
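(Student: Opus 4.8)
The plan is to place $T$ inside the standard machinery for building Young towers over two–dimensional hyperbolic maps with singularities, as developed by Young (\cite{Y1},\cite{Y2}) and by Chernov in the billiard setting (see \cite{CM}, Chapter~7), and to check that the single nontrivial input of that machinery is furnished by the subexponential complexity hypothesis. The purely structural requirements — uniform hyperbolicity with uniformly transversal invariant cone fields $C^u_x,C^s_x$; bounded distortion along unstable curves; absolute continuity of the stable holonomy; and sufficient regularity of the singularity set (each $R_n$ bordered by finitely many smooth curves of uniformly bounded curvature, with the requisite alignment of the discontinuity curves $r_n$ relative to the cone fields, so that $T$ maps unstable curves to unstable curves) — are either established in \cite{BBNV}, section~3, or read off the explicit formulas \eqref{TonRn} and \eqref{jacobian}. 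The genuinely nontrivial ingredient is the \emph{growth lemma}: for the fixed value of $m$ in question, a short unstable curve grows under iteration of $T$ at a uniform exponential rate back to macroscopic size, with the conditional (one–dimensional) measure of the exceptional points — those whose forward images keep being chopped short by accumulating singularities — decaying exponentially in the number of iterates. Granting the growth lemma, one assembles the tower in the usual way: fix a reference rectangle $\Delta_0$ with a hyperbolic product structure of stable and unstable leaves, wait for unstable leaves to recover length and $u$-cover $\Delta_0$, and read off the first return time; the exponential recovery rate turns into an exponential bound on the tail of the return time, i.e. a Young tower with exponential tails.

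So the task reduces to deducing the growth lemma from subexponential complexity and uniform hyperbolicity. This goes through an $N$-step expansion estimate. Let $\mathcal{S}$ denote the singularity set of $T$ (the curves $r_n$ together with $\partial\mathcal{M}_1$) and $\mathcal{S}_N$ that of $T^N$. For an unstable curve $W$, write $T^NW=\bigcup_i V_i$ for its decomposition into maximal smooth homogeneous components and let $\Lambda_i$ be the minimal expansion factor of $T^N$ on the part of $W$ carried onto $V_i$. Uniform hyperbolicity gives $\Lambda_i\ge\Lambda_0^{\,N}$ for a fixed $\Lambda_0>1$, while the number of components $V_i$ meeting a fixed small scale is bounded by the complexity $\mathcal K_N$ of $T^N$ (the maximal number of smooth branches of $\mathcal S_N$ meeting near a point). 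Subexponential complexity means $\mathcal K_N = e^{o(N)}$, hence $\mathcal K_N\Lambda_0^{-N}\to 0$, so one may fix $N$ large enough that
\[ \liminf_{\delta\to 0}\ \sup_{W:\ |W|\le\delta}\ \sum_i \frac{1}{\Lambda_i}\,\frac{|V_i|}{|W|}\ <\ 1, \]
which is exactly the hypothesis the growth lemma needs. One subtlety: because the $R_n$ accumulate, distortion along unstable curves is unbounded unless each $R_n$ with $n$ large is further cut into thinner ``homogeneity-type'' strips, as in billiard theory; this introduces unboundedly many extra pieces per step, but the expansion on those strips is correspondingly large — the entry $\tfrac{\sqrt 2(n+1)}{m\sqrt F}$ of \eqref{jacobian} grows with $n$ — so their contribution to the sum above is summable and can be absorbed, and bounded distortion on homogeneous unstable curves then follows by the usual chain-rule and curvature estimates.

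The main obstacle is precisely the balancing act inside this $N$-step estimate: one must control at once (i) the cutting of unstable curves by $\mathcal S_N$, where subexponential complexity is essential since a naive branch count for $T$ is already infinite, and (ii) the cutting by the homogeneity strips together with the weakened expansion in the slow part of the phase space near $\partial\mathcal M_1$; showing that their combined effect is still dominated by $\Lambda_0^{-N}$, uniformly as $|W|\to 0$, requires the detailed geometry of the curves $r_n$ and of the images $T(R_n)$ from \cite{BBNV}. A further, more routine point is to upgrade the one–dimensional exponential bound from the growth lemma to an exponential bound on the $T$-invariant (normalized Lebesgue, by the remark after \eqref{jacobian}) measure of the set of points with a given return time to $\Delta_0$; this is the standard Fubini-type argument over $\Delta_0$ using absolute continuity of the holonomies. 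Finally, note that Theorem~\ref{youngtower} is then recovered by exhibiting an open interval $I\subseteq(1/2,1)$ on which the complexity is subexponential.
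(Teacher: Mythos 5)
Your proposal follows essentially the same route as the paper: Theorem~\ref{complexity} is quoted from \cite{BBNV} rather than proved here, and the paper's own description of the mechanism (see the discussion before the proof of Theorem~\ref{assumingcomplexity}) is precisely that subexponential complexity is ``a usual assumption in the literature ensuring that expansion prevails cutting, which in particular implies the growth lemma and therefore the required tower representation.'' Your fleshing-out via the $N$-step expansion estimate, the homogeneity-strip subdivision compensated by the $n$-dependent expansion in \eqref{jacobian}, and the standard Chernov--Young tower construction is exactly the intended argument.
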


We also recall from our previous work that there is an involution for the map $T$, i.e. there is a smooth map $I : \mathcal{M}_1 \to \mathcal{M}_1$ such that

\begin{equation}\label{invdefi}
T^{-1} = I \circ T \circ I,
\end{equation}
on every smoothness component of $T^{-1}$. This corresponds to the natural time reflection in the continuous time system and its action in the coordinates $(h,z)$ is given by

\begin{equation}\label{involformula}
I(h,z) = (m(1-h/m+\alpha z^2),z) = (m F,z).
\end{equation}
It can be deduced that $I$ maps each $R_n$ to $T(R_n)$, and it follows from \eqref{invdefi} that $I$ maps stable curves into unstable curves and vice versa, moreover it maps stable manifolds into unstable manifolds and vice versa. Also, the curves $I(r_n)$ are the singularities of the inverse dynamics.
Finally, let us recall from \cite{BBNV} the notations for the corner points of the sets $R_n$ and $T(R_n)$. These are the intersections of the singularities $r_n$, or the inverse singularities $I(r_n)$ with the boundary of the phase space. All the curves $r_n$ are stable curves, in particular they are increasing and hence it makes sense to talk about their left and right endpoints. They are given, respectively, by the formulas

\begin{equation}\label{pointsXBx}
\begin{split}
Bx_n(m) &= (Bxh_n(m), Bxz_n(m)) \\ &= \Bigl(\frac{m(-2m(n+2)+2n+3)^2}{2(1-m)n(n+2)+2}, -\frac{n+2}{\sqrt{1-(m-1)n(n+2)}}\Bigr), \\
X_n(m) &= (Xh_n(m), Xz_n(m)) \\ &= \Bigl(\frac{m(3+2n-2m(n+1))^2}{2(n+2)^2-2m(n+1)(n+3)}, -\frac{n+1}{\sqrt{(n+2)^2-m(n+1)(n+3)}}\Bigr).
\end{split}
\end{equation}

\begin{figure}[!h]
\centering
\includegraphics[scale=0.7]{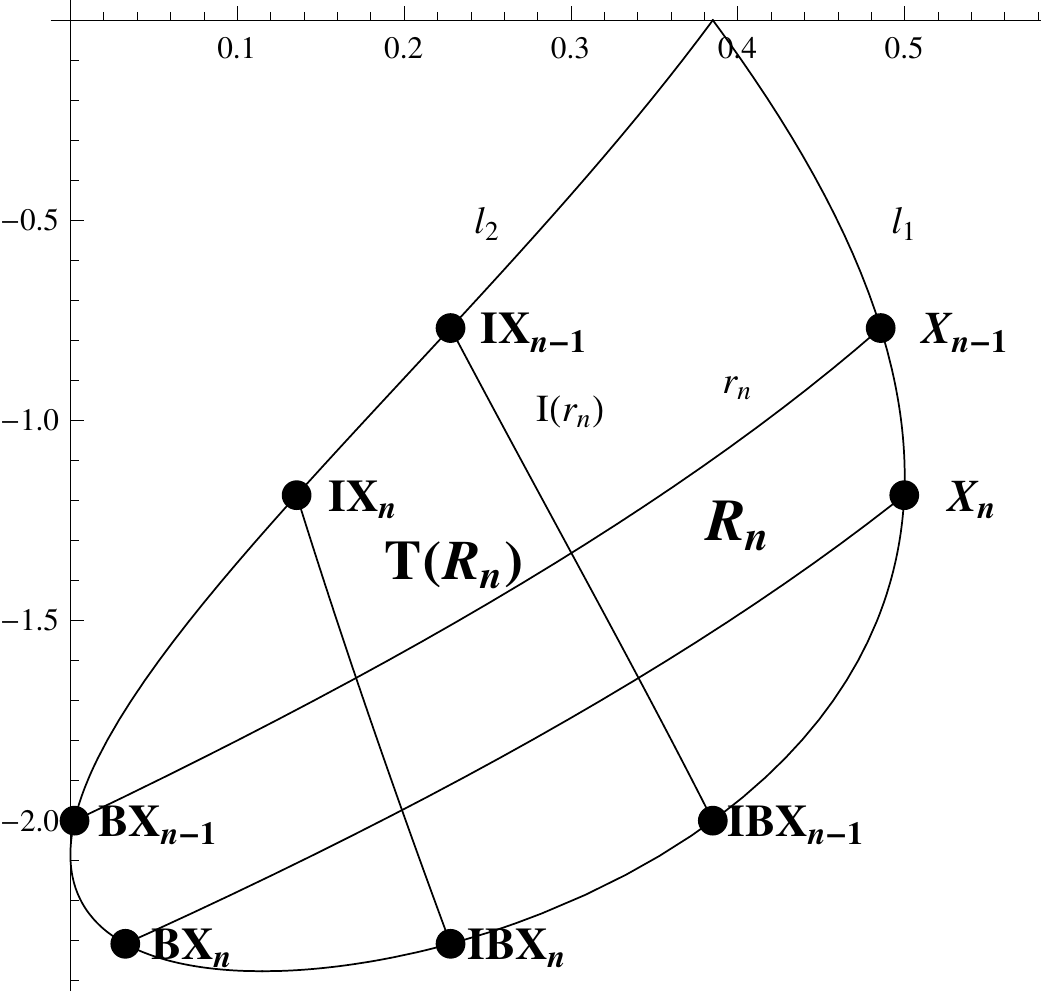}
\caption{The corners of the sets $R_n$ and $T(R_n)$}
\label{cornerpoints}
\end{figure}

The curves $I(r_n)$ are decreasing and so their left and right (or in this case rather top and bottom) endpoints are well defined, too. They are given, respectively, by the formulas

\begin{equation}\label{pointsIxIbx}
\begin{split}
Ix_n(m) &= (Ixh_n(m), Ixz_n(m)) \\ &= \Bigl(\frac{m}{2(n+2)^2-2m(n+1)(n+3)}, -\frac{n+1}{\sqrt{(n+2)^2-m(n+1)(n+3)}}\Bigr), \\
Ibx_n(m) &= (Ibxh_n(m), Ibxz_n(m)) \\ &= \Bigl(\frac{m}{2(1-m)n(n+2)+2}, -\frac{n+2}{\sqrt{1-(m-1)n(n+2)}}\Bigr).
\end{split}
\end{equation}

\section{Statement of results \label{s:results}}

To state our results we model the continuous time dynamics of the system of two falling balls by a suspension flow over the uniformly hyperbolic base map $T$, discussed in the previous section. Using the classical laws of Newtonian mechanics one can calculate how much time is needed for the flow to return to the Poincar\'e section $\mathcal{M}_1$ starting from the point $(h,z) \in \mathcal{M}_1$. It is given as

\begin{gather}\label{rooffunction}
\nonumber \tau : \mathcal{M}_1 \to \mathbb{R}^+, \\ \tau(h,z) = (2n+1)\sqrt{2F} + \sqrt{\frac{2h}{m}} - 2(m-1)z  \quad \text{for } (h,z) \in R_n.
\end{gather}
It can be shown that $\tau$ is piecewise $C^2$ with the same discontinuities as the discrete time dynamics $T$ (see \cite{BBNV}, subsection 3.7 for details). The flow is then isomorphic to the following suspension.

The phase space for the suspension flow is defined to be

\begin{equation}\label{suspensionphasespace}
\tilde{\mathcal{M}} = \{ (x,t) | x \in \mathcal{M}_1, 0 \leq t < \tau(x) \},
\end{equation}
and, after setting the equivalence relation $(x,\tau(x)) \sim (T(x),0)$, the continuous time action of the dynamics is given by

\begin{equation}\label{suspensionflow}
\Phi^t : \tilde{\mathcal{M}} \to \tilde{\mathcal{M}}, \quad \Phi^t(x,s) = (x,s+t)/\sim.
\end{equation}
Finally, the normalized Lebesgue measure on $\tilde{\mathcal{M}}$ is an ergodic invariant measure for this suspension flow.

We define a class of observables in the following way. For a function $v : \tilde{\mathcal{M}} \to \mathbb{R}$ we set its $\eta$-norm to be
\[ \|v\|_{\eta} := \|v\|_{\infty} + \sup\limits_{x \neq y}|v(x,u)-v(y,u)|/d(x,y)^{\eta}. \]
Then for $m \in\mathbb{N}$ and $\eta > 0$, let $C^{m,\eta}(\tilde{\mathcal{M}})$ the collection of $v : \tilde{\mathcal{M}} \to \mathbb{R}$ such that $\|v\|_{m,\eta} := \|v\|_{\eta} + \|\partial_t v\|_{\eta} + \dots + \|\partial_t^m v\|_{\eta} < \infty$, where $\partial_t$ denotes differentiation in the flow direction.

\begin{defi}
The suspension flow $\Phi^t$ is \emph{rapid mixing} if for any $n \geq 1$ there exist $m \geq 1$, $C \geq 0$ such that
\[ \biggl|\int\limits_{\tilde{\mathcal{M}}} v \cdot w \circ \Phi^t \, d Leb - \int\limits_{\tilde{\mathcal{M}}} v \, d Leb \cdot \int\limits_{\tilde{\mathcal{M}}} w \, d Leb\biggr| \leq C \|v\|_{m,\eta} \|w\|_{m,\eta} t^{-n}, \]
for every $v, w \in C^{m,\eta}(\tilde{\mathcal{M}})$ and $t > 0$.
\end{defi}

Here we state our results.

\begin{thm}\label{aediophantine}
The continuous time dynamics of the system of two falling balls is rapid mixing for almost every $m \in I$, where $I \subset (1/2,1)$ is the interval from Theorem \ref{youngtower}.
\end{thm}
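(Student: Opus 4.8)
The plan is to combine the criterion for rapid mixing of suspension flows with an explicit construction of two periodic orbits of the discrete time map $T$ whose continuous time periods have a Diophantine ratio (the DPO condition). By Theorem \ref{youngtower}, for $m\in I$ the map $T$ is modelled by a Young tower with exponential tails, so that the results of Melbourne \cite{M} apply: it suffices to exhibit, for almost every $m\in I$, two periodic points $p,q$ of $T$, lying on the base of a common Young tower, such that the ratio of the $\tau$-periods
\[
\frac{\sum_{i=0}^{k-1}\tau(T^i p)}{\sum_{j=0}^{\ell-1}\tau(T^j q)}
\]
is Diophantine (badly approximable by rationals, or at least not Liouville). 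So the core of the argument is a geometric construction, and the main work is (a) finding the periodic orbits explicitly and computing their periods as functions of $m$, (b) showing the period ratio is a non-constant real-analytic function of $m$ on a subinterval, hence Diophantine for a.e. $m$ there, and (c) upgrading ``periodic orbits of $T$'' to ``periodic orbits on the base of the same Young tower,'' which is where the shadowing argument enters.

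First I would search for low-complexity periodic orbits. The natural candidates are fixed points or period-2 orbits of $T$ that stay inside a single partition element $R_n$, since on $R_n$ the dynamics \eqref{TonRn} is an explicit algebraic map; using \eqref{TonRn} and \eqref{Fandalpha} the fixed point equation $T(h,z)=(h,z)$ becomes a pair of polynomial equations in $h,z$ with parameter $m$ and integer $n$, which can be solved in closed form. The involution \eqref{invdefi}–\eqref{involformula} is likely to be useful here: points fixed by $I$ give symmetric orbits, and the period of an $I$-symmetric orbit can be read off from \eqref{rooffunction} with reduced bookkeeping. In this way I expect to produce at least two families of periodic points, say one supported in $R_{n_1}$ and one in $R_{n_2}$ (or one a fixed point and one a period-2 orbit), with periods $\tau_1(m)$ and $\tau_2(m)$ given by explicit expressions involving square roots of rational functions of $m$. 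Then I would prove that $\tau_1/\tau_2$ is real-analytic and non-constant on some open subinterval of $I$ — typically by evaluating it, or a derivative, at a convenient point, or by a degree/asymptotic argument showing it cannot be identically constant — and invoke the classical fact that a non-constant real-analytic function maps a.e. point of its domain to a Diophantine number (since the set of Liouville numbers has measure zero and its preimage under a non-constant analytic map is again null). This yields DPO for a.e. $m$ in that subinterval; since $I$ is an interval and the analytic structure persists, one concludes DPO for a.e. $m\in I$.

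The delicate point, and the one I expect to be the main obstacle, is (c): Melbourne's theorem requires the two periodic orbits to be present on the base of a \emph{single} Young tower, whereas the orbits constructed above generically lie a macroscopic distance apart and need not be simultaneously capturable by one tower. To handle this I would run a shadowing type argument: fix one of the periodic orbits, say $p$, as an anchor lying on (or near) the base; then, using uniform hyperbolicity of $T$ — the invariant transversal cone fields $C^u_x,C^s_x$ and the bounded distortion estimates from \cite{BBNV} — construct a genuine periodic point $\hat q$ of $T$ close to a long orbit segment that first shadows $p$'s orbit, then travels to the neighborhood of $q$, shadows $q$'s orbit many times, and returns. Such $\hat q$ exists by the standard closing/shadowing lemma for hyperbolic maps with singularities (one must check the orbit segment avoids the discontinuity set $\cup_n\partial R_n$ by a definite margin, which is a quantitative transversality check using the explicit corner points \eqref{pointsXBx}–\eqref{pointsIxIbx}). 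By making the number of windings around $q$ large, the period of $\hat q$ is $(\text{number of windings})\cdot\tau_2(m)$ plus a bounded correction, so the ratio $\tau(\text{orbit of }p)/\tau(\text{orbit of }\hat q)$ is a small perturbation of $\tau_1(m)/(N\tau_2(m))$; since the original ratio is Diophantine for a.e. $m$ and the Diophantine property is robust under dividing by an integer and adding a controlled analytic perturbation, the perturbed ratio remains Diophantine for a.e. $m$. Finally, $p$ and $\hat q$ can both be arranged to lie on the base of one Young tower (the construction of the tower in \cite{BBNV} is flexible enough to include a prescribed periodic point in its base, or one can pass to a return map), and then \cite{M} gives rapid mixing of $\Phi^t$ for a.e. $m\in I$, which is the assertion of Theorem \ref{aediophantine}.
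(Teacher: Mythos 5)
Your overall architecture matches the paper's: fixed points $F_0(m)$, $F_1(m)$ from \eqref{fixedpoints} with explicit periods \eqref{periods}, a monotone period ratio \eqref{periodratio} giving DPO for a.e.\ $m$, and then a shadowing-type construction to place two periodic orbits on the base of one Young tower. The construction you sketch for $\hat q$ (shadow $p$, then wind $N$ times around $q$) is essentially the paper's $P_n(m)$ of \eqref{periodicpoints}, realized there not by an abstract closing lemma but by exhibiting a full shift on the symbols $\{0,1\}$ via the Markov crossing properties of the rectangles $R_i\cap T^{-1}(R_j)$ (Lemma \ref{subsystem}), which is also where the restriction $m\in[2/3,3/4]$ and the avoidance of singularities are handled concretely.

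However, there is a genuine gap at the decisive step. You assert that the ratio of the period of $\hat q$ to that of $p$ is "a small perturbation" of $\tau_1(m)/(N\tau_2(m))$ and that "the Diophantine property is robust under \ldots adding a controlled analytic perturbation." For a \emph{fixed} $m$ this is simply false: an arbitrarily small perturbation of a Diophantine number can be rational or Liouville, and the size of the perturbation here does not shrink as a function of the denominators in the Diophantine inequality. The a.e.-in-$m$ conclusion therefore cannot be inherited from the unperturbed ratio; it must be re-proved for the perturbed family. This is exactly where the paper spends most of its effort: Proposition \ref{C0convergence} shows the normalized period ratio of $P_n(m)$ and $F_0(m)$ converges uniformly to \eqref{periodratio}, and Proposition \ref{C1convergence} shows the convergence holds in $C^1$ \emph{as a function of $m$}, so that for large $n$ the perturbed ratio is itself a strictly monotone $C^1$ function of $m$ and hence Diophantine for a.e.\ $m$. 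The $C^1$ statement is nontrivial because the periodic point $P_n^k(m)$ and the map $T$ both vary with $m$; one must differentiate the periodicity equation implicitly (formula \eqref{periodicderiv}), bound $\frac{d}{dm}P_n^k(m)$ uniformly in $n$ using the hyperbolic splitting (Lemma \ref{boundedderiv}), and invoke dynamical H\"older continuity of the stable/unstable directions (Lemma \ref{stableunstabledhc}). None of this is present in your proposal, and without it the a.e.\ Diophantine claim for the perturbed orbits does not follow. A secondary, smaller gap: you say $p$ and $\hat q$ "can be arranged" to lie on the base of one tower; the paper justifies this by showing $P_n(m)$ and $F_0(m)$ are exponentially close with stable and unstable manifolds fully crossing the phase space, and that the tower size can be chosen uniformly in $m$.
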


\begin{thm}\label{assumingcomplexity}
Assuming subexponential complexity for the base map $T$ (so that Theorem \ref{complexity} can be applied), the continuous time dynamics of the system of two falling balls is rapid mixing for almost every $m \in [2/3,1)$.
\end{thm}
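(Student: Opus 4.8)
The plan is to reduce Theorem~\ref{assumingcomplexity} to Theorem~\ref{aediophantine} by showing that the arguments used to establish rapid mixing on the interval $I$ from Theorem~\ref{youngtower} in fact only use the existence of a Young tower with exponential tails, together with the DPO property of the specific periodic orbits constructed in Section~\ref{calculations} (extended as in Section~\ref{s:extension} to the range $m\in[2/3,1)$). Concretely, recall that rapid mixing of the suspension flow follows from Melbourne's result \cite{M} once two ingredients are in place: (i) the base map $T$ is modelled by a Young tower with exponential tails, and (ii) there are two periodic orbits on the base of the \emph{same} Young tower whose return times under the roof function $\tau$ have a Diophantine ratio. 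Ingredient (i) is supplied for $m\in I$ by Theorem~\ref{youngtower}, but under the subexponential complexity hypothesis, Theorem~\ref{complexity} upgrades this to \emph{every} $m\in(1/2,1)$, in particular to every $m\in[2/3,1)$. So the only thing that changes is that the measure-zero exceptional set from the Diophantine analysis must be controlled on the larger parameter interval.

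First I would recall precisely the output of Sections~\ref{calculations} and~\ref{s:extension}: they produce, for every $m$ in an interval containing $[2/3,1)$ (the extension being governed by Formula~\eqref{ratioextended} and the remarks following it), an explicit pair of periodic points $p_1(m),p_2(m)\in\mathcal{M}_1$ together with a shadowing-type construction placing (perturbed versions of) both orbits on the base of a common Young tower, and they show that the ratio $\tau_{\mathrm{per}}(p_1(m))/\tau_{\mathrm{per}}(p_2(m))$ of the corresponding continuous-time periods is a real-analytic, non-constant function of $m$ on that interval. A non-constant real-analytic function takes any fixed value (in particular, rational values, or values in any fixed Liouville-type null set) only on a set of measure zero; a countable union of such null sets is still null. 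Hence the set of $m\in[2/3,1)$ for which the period ratio fails the Diophantine condition is Lebesgue-null. For every other $m\in[2/3,1)$, both ingredients (i) and (ii) of Melbourne's theorem hold, so the suspension flow — which by Section~\ref{s:results} is isomorphic to the continuous-time dynamics of the two falling balls — is rapid mixing.

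The structure of the proof is therefore: (1) invoke Theorem~\ref{complexity} to obtain a Young tower with exponential tails for \emph{every} $m\in[2/3,1)$ under the subexponential complexity assumption; (2) invoke the construction of Section~\ref{s:extension} to obtain, for every $m\in[2/3,1)$, two periodic orbits realizable on the base of a single such tower, with analytically varying period ratio; (3) apply the analyticity/measure-zero argument to exclude a null set of bad parameters; (4) apply \cite{M} to conclude rapid mixing for the remaining full-measure set of $m\in[2/3,1)$. Steps (1), (2) and (4) are essentially citations to results already in place (Theorem~\ref{complexity}, the extension section, and \cite{M}), so the substantive content is step~(3).

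The main obstacle I anticipate is making sure that the shadowing construction and the formula for the period ratio, worked out in Section~\ref{calculations} explicitly for $m\in(\tfrac23,\tfrac34)$, genuinely extend to all of $[2/3,1)$ in a way that keeps the period ratio a \emph{non-constant} analytic function there: one must check that the relevant periodic points remain in the domain of the construction (the combinatorial data defining them, such as the return counts $R_n$ along the orbits, may jump as $m$ varies, forcing a finite subdivision of $[2/3,1)$ into subintervals on each of which a fixed pair of orbits is used), and that on each such subinterval the period-ratio function is not identically equal to a constant — if it were, one would need to replace one of the two orbits by a different choice. This bookkeeping, together with verifying that the endpoint $m=2/3$ and the behavior as $m\to 1$ cause no degeneracy in the hyperbolicity or in the shadowing estimates, is where the real work lies; the rest is a soft consequence of the cited theorems.
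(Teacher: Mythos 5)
Your overall architecture matches the paper's: under the subexponential complexity hypothesis Theorem~\ref{complexity} supplies the tower for every $m$, the interval $[2/3,1)$ is covered by the overlapping parameter intervals $\bigl[\frac{1+k}{2+k},\frac{2k^2+2k-1}{2k^2+2k}\bigr]$ from Lemma~\ref{subsystemextended} (the paper verifies the overlap via $\frac{k+2}{k+3}\le\frac{2k^2+2k-1}{2k^2+2k}$ for $k\ge1$), and on each such interval one reruns the proof of Theorem~\ref{aediophantine} with $F_{k-1},F_k$ in place of $F_0,F_1$. Your anticipated ``bookkeeping'' is exactly this subdivision. However, your step (3) --- the only substantive step by your own account --- contains a genuine gap. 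You argue that a non-constant real-analytic function takes ``any fixed value'' only on a null set and then invoke a \emph{countable} union of null sets. The set of non-Diophantine numbers, while Lebesgue-null, is uncountable, so the preimage of that set cannot be controlled by taking preimages of individual values. What is actually needed, and what the paper uses, is that the period-ratio function has \emph{nonzero derivative} (equivalently, is strictly monotone) on the relevant interval, so that it is a local diffeomorphism and therefore pulls back the null set of non-Diophantine numbers to a null set. The paper verifies this by computing that the explicit ratio \eqref{ratioextended} is $C^1$ (since $\alpha<0$ keeps the denominator away from zero) and strictly decreasing on $[1/2,3/4]$, strictly increasing on $[3/4,1]$, hence strictly monotone on each of the $k$-th intervals.

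A second, related weakness: the periodic-orbit pair that Melbourne's theorem actually sees is not $(F_{k-1}(m),F_k(m))$ but $(F_{k-1}(m),P_n(m))$ for large $n$, since only these lie on the base of a common tower. Their period ratio is \emph{not} given by the closed analytic formula \eqref{ratioextended}; it is only shown (Propositions~\ref{C0convergence} and~\ref{C1convergence}) to converge to that formula in the $C^1$ topology, uniformly in $m$. Asserting that this finite-$n$ ratio is ``real-analytic, non-constant'' is neither established in the paper nor needed; what is needed, and what the $C^1$-convergence delivers, is that for $n$ large the finite-$n$ ratio inherits the strict monotonicity of the limit on each parameter interval, whence it is Diophantine for almost every $m$ there. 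If you replace your analyticity/level-set argument by this monotonicity argument (and note that the single critical point $m=3/4$ of the limit ratio sits at an endpoint of the $k=1$ and $k=2$ intervals, so causes no harm), your proof coincides with the paper's.
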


\section{Periodic orbits}\label{calculations}

\subsection{Outline of our strategy}
As already mentioned in the introduction, the core of our argument concerns the existence of periodic orbits with sufficiently diverse periods in the following sense.
If a point $x\in\mathcal{M}_1$ is periodic for $T$ with discrete period $k$
(ie.~$T^kx=x$) then it is also periodic for the suspension flow with flow period $\tau_k(x)=\tau(x)+\tau(Tx)+...+\tau(T^{k-1}x)$. Given two periodic points $x$ and $y$ with the same discrete period $k$, their (flow)
period ratio is defined as $\frac{\tau_k(x)}{\tau_k(y)}$. Furthermore, recall (eg.~from \cite{Bu}) that a number $\omega\in\mathbb{R}$ is Diophantine if it is badly approximable by rationals, ie.~if there exist $K>0$ and $\beta>1$ such that for any $p,q\in\mathbb{Z}$, $q\ne0$  we have $|\omega q-p|\geq K|q|^{-\beta}$.  The set of Diophantine numbers is of full Lebesgue measure. We will say that two periodic points have a Diophantine
period ratio if $\frac{\tau_k(x)}{\tau_k(y)}$ is Diophantine.

In his work \cite{D}, Dolgopyat showed that mixing suspension flows over subshifts of finite type are rapid mixing if there exist two periodic orbits for the flow with Diophantine period ratio. Later in \cite{M} Melbourne extended the approach to a large class of nonuniformly hyperbolic flows using operator renewal theory (see also \cite{MT}). These flows are the continuous time analogue of the nonuniformly hyperbolic maps studied in \cite{Y1}. He showed that if the system is not rapid mixing, then there must be some resonance in the roof function. In particular if there exist four periodic points with periods satisfying a Diophantine type relation, then the suspension flow is rapid mixing (see \cite{M} Theorem 2.6). The proof of this result contains "double inducing", first reducing from the flow to a nonuniformly hyperbolic diffeomorphism and then from this to a uniformly hyperbolic one. However one could do this in one go, immediately reducing to a uniformly hyperbolic base map at the cost of having a larger roof function. The advantage is that by having only one inducing, there is only one complex variable to appear in the proofs (\cite{M1}). Therefore, if one can find two periodic orbits on the base of the Young tower with Diophantine period ratio, then one can conclude rapid mixing for the flow (cf.~\cite{M}, Remark 2.7).

We recall the formulas for the induced dynamics $T : \mathcal{M}_1 \to \mathcal{M}_1$ and for the roof function $\tau : \mathcal{M}_1 \to \mathbb{R}$. They are given by

\begin{equation}\label{dynamics}
\begin{split}
T(h,z) = (m F , -(2n+2) \sqrt{2F} - z) \quad \text{for } (h,z) \in R_n \\
\tau(h,z) = (2n+1)\sqrt{2F} + \sqrt{\frac{2h}{m}} - 2(m-1)z \quad \text{for } (h,z) \in R_n
\end{split}
\end{equation}
where $F = 1 - h/m + \alpha z^2$ and $\alpha = 1-3m+2m^2$. We emphasize their dependence on the parameter $m$.

When searching for periodic points the easiest attempt is to look for fixed points. Simply by solving the equation $T(h,z) = (h,z)$ we get the candidates

\begin{equation}\label{fixedpoints}
F_n(m) = \biggl(\frac{m}{2(1-\alpha(n+1)^2)}, \frac{-(n+1)}{\sqrt{1-\alpha(n+1)^2}}\biggr).
\end{equation}
Even though these are solutions for the fixed point equation, not all of them are physical solutions, i.e. some of the $F_n(m)$'s may lie outside of the phase space and hence do not correspond to a valid physical configuration. However one can check that $F_0(m)$ and $F_1(m)$ are always physical solutions. Indeed both relations $F_0(m) \in R_0$ and $F_1(m) \in R_1$ are satisfied for every $m \in (1/2, 1)$. We calculate the periods of these points in continuous time

\begin{equation}\label{periods}
\tau(F_0(m), m) = \frac{2m}{\sqrt{3m-2m^2}} \quad \tau(F_1(m), m) = \frac{2m}{\sqrt{-3/4+3m-2m^2}},
\end{equation}
and consider the ratio of the two periods

\begin{equation}\label{periodratio}
\frac{\tau(F_1(m), m)}{\tau(F_0(m), m)} = \frac{\sqrt{3m-2m^2}}{\sqrt{-3/4+3m-2m^2}}.
\end{equation}
This function is $C^1$ on $[1/2, 1]$, strictly decreasing on $[1/2, 3/4]$ and strictly increasing on $[3/4, 1]$. As a consequence the ratio \eqref{periodratio} is Diophantine for almost every $m \in (1/2, 1)$. Yet, to conclude that the continuous time system mixes rapidly we can not just directly apply the results of Melbourne, because the points $F_0$ and $F_1$ have a macroscopic distance in the phase space and so they might not be represented on the base of the same tower as required in \cite{M}.

Our strategy will be as follows. With the help of the natural partition of the phase space we switch to a symbolic space. Using the geometric properties of the map and the partition elements, for certain values of the mass parameter we guarantee the existence of a subsystem, which is a two-sided full shift on the two symbols $0$ and $1$. We construct a sequence of periodic points $\{P_n(m)\}$ that accumulates on $F_0(m)$, but in a way that the trajectory of $P_n(m)$ spends more and more time in the vicinity of $F_1(m)$ as $n$ increases. We also show that all points $F_0(m)$ and $\{P_n(m)\}$ have sufficiently long stable and unstable manifolds, hence a tower can be built such that for large enough $n$ both $F_0(m)$ and $P_n(m)$ are on the base of it. In terms of \cite{M}, for rapid mixing it is then enough to show that the ratio of the continuous periods of $P_n(m)$ and $F_0(m)$ is Diophantine.

\subsection{The full shift as a symbolic subsystem}
For a point $P \in \mathcal{M}_1$ we adjust the two-sided infinite sequence $\underline{x}=\{x_i\}$ as its symbolic representation, where $x_i = n$ iff $T^i(P) \in R_n$. We denote the natural projection from the symbolic space to $\mathcal{M}_1$ by $\pi$, set the past and future separation time for two sequences $\underline{x}$ and $\underline{y}$ as

\begin{gather*}
s_-(\underline{x}, \underline{y}) = \min\{ |k| : k < 0, x_k \neq y_k \}, \\
s_+(\underline{x}, \underline{y}) = \min\{ k : k \geq 0, x_k \neq y_k \},
\end{gather*}
and also the separation time as
\[ s(\underline{x}, \underline{y}) = \min\{ |k| : x_k \neq y_k \} = \min\{s_-(x,y),s_+(x,y)\}. \]
We define the symbolic distance as
\[ d_{sym}(\underline{x}, \underline{y}) = \theta^{s(\underline{x}, \underline{y})}, \]
where $\theta \in (0,1)$ is chosen in such a way that the distance on $\mathcal{M}_1$ and in the symbolic space are related as

\begin{equation}\label{metricsequiv}
d(\pi(\underline{x}), \pi(\underline{y})) \leq C \cdot d_{sym}(\underline{x}, \underline{y}),
\end{equation}
for some constant $C > 0$. By uniform hyperbolicity of the discrete time system (proved in \cite{BBNV}) there exists such a $\theta \in (0,1)$.

We now show the existence of the two-sided infinite, full shift subsystem.

\begin{lem}\label{subsystem}
For every $m \in [2/3, 3/4]$ the projection of any sequence $\underline{x} \in \{0,1\}^{\mathbb{Z}}$ is realised as a physical configuration, i.e. as a point in $\mathcal{M}_1$. Any such point has local stable and unstable manifolds that fully cross the phase space.
\end{lem}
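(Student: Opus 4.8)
The plan is to recognise the sub-dynamics of $T$ on $R_0\cup R_1$ as a Smale horseshoe and to run a Conley--Moser type argument, with $\mathcal{M}_1$ playing the role of the ambient ``square'' and $R_0,R_1$ the two ``strips''. The geometric input from Section~\ref{s:setup} is the following. Each $r_n$ is an increasing (stable) curve whose endpoints $Bx_n(m),X_n(m)$ of \eqref{pointsXBx} lie on $\partial\mathcal{M}_1$; hence $R_0$ and $R_1$ are curvilinear strips whose stable boundary curves run from one part of $\partial\mathcal{M}_1$ to another, i.e.\ they ``fully cross'' $\mathcal{M}_1$ in the stable direction. Dually, $T(R_n)=I(R_n)$ is bounded by the decreasing (unstable) curve $I(r_n)$ with endpoints $Ix_n(m),Ibx_n(m)$ on $\partial\mathcal{M}_1$ as in \eqref{pointsIxIbx}, so $T(R_0),T(R_1)$ fully cross $\mathcal{M}_1$ in the unstable direction. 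Throughout I would lean on the two facts from \cite{BBNV} recalled in Section~\ref{s:setup}: stable/unstable curves are increasing/decreasing in the $(h,z)$-coordinates, and the cone fields are uniformly transversal.

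Call a transition $a\to b$ (with $a,b\in\{0,1\}$) \emph{admissible} if $T(R_a)\cap R_b$ contains a sub-strip of $R_b$ that still fully crosses $\mathcal{M}_1$ in the stable direction. I claim the lemma follows once all four transitions $0\to0$, $0\to1$, $1\to0$, $1\to1$ are admissible. Indeed, given $\underline{x}\in\{0,1\}^{\mathbb{Z}}$, the forward cylinders $\bigcap_{k=0}^{N}T^{-k}R_{x_k}$ form a nested sequence of sub-strips of $R_{x_0}$, each fully crossing $\mathcal{M}_1$ in the stable direction -- admissibility of $x_k\to x_{k+1}$ being exactly what propagates this property through one more intersection -- and, by uniform hyperbolicity, their widths shrink geometrically, so the intersection is a single stable curve $W^s_{\mathrm{loc}}(\pi(\underline{x}))$ crossing $\mathcal{M}_1$; since its forward iterates are again such crossing sub-strips, it is a genuine local stable manifold. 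Running the analogous argument for $T^{-1}$, or more economically applying the involution $I$, which conjugates $T$ to $T^{-1}$ and interchanges $R_n\leftrightarrow T(R_n)$ and stable $\leftrightarrow$ unstable, the past of $\underline{x}$ produces a crossing unstable curve $W^u_{\mathrm{loc}}(\pi(\underline{x}))$. By uniform transversality of the cone fields these two curves meet in exactly one point, which we call $\pi(\underline{x})$, and they are its local stable and unstable manifolds. Both assertions of the lemma follow.

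It remains to verify the four transitions, and this is where $m\in[2/3,3/4]$ enters -- the core of the proof. Since $I$ fixes the $z$-coordinate, $T(R_n)=I(R_n)$ occupies exactly the $z$-range of $R_n$, so $a\to b$ can be admissible only when the $z$-ranges of $R_a$ and $R_b$ overlap; consequently the global transition graph of $T$ is essentially tridiagonal and the real content is that the off-diagonal transitions $0\to1$ and $1\to0$ are not merely non-empty but full. Unwinding the definition, admissibility of the four transitions translates into a short list of inequalities among the coordinates of the corner points \eqref{pointsXBx} and \eqref{pointsIxIbx}: one compares the $z$-extents of $I(r_0)$, $r_0$ and $r_1$, and the $h$-coordinates of $Ix_n(m),Ibx_n(m)$ against those of $Bx_n(m),X_n(m)$, in order to pin down on which sides of $r_0$ and $r_1$ the unstable boundary curves $I(r_0),I(r_1)$ of $T(R_0),T(R_1)$ run. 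I would carry this out by establishing monotonicity of the relevant explicit functions of $m$ on $[2/3,3/4]$ and evaluating at the endpoints.

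The main obstacle I expect is precisely this last step. The formulas in \eqref{pointsXBx} and \eqref{pointsIxIbx} are explicit but cumbersome, and the inequalities must be made to hold uniformly over the whole interval rather than only at sample values; outside $[2/3,3/4]$ one expects one of the off-diagonal transitions to fail, which is the structural reason for the restriction. A further subtlety is that $\mathcal{M}_1$ is a curvilinear, non-convex domain, so the intuitively obvious fact that a ``full-width'' unstable strip must meet a ``full-height'' stable strip is not automatic; it has to be deduced from the actual shape of $\mathcal{M}_1$ together with the monotonicity of stable and unstable curves. Finally, since the $r_n$ and $I(r_n)$ are genuine discontinuity curves, a little care is needed in deciding whether the cylinder sets are taken open or closed and in handling boundary points, but this is routine once the crossing picture is in place.
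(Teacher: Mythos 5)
Your overall architecture matches the paper's -- a Conley--Moser horseshoe built from the stable strips $R_0,R_1$ and the unstable strips $T(R_0),T(R_1)$, an iterated nesting of crossing sub-rectangles, and the involution $I$ to manufacture the unstable half -- and you correctly diagnose that everything hinges on inequalities among the corner points in \eqref{pointsXBx}--\eqref{pointsIxIbx}. The first genuine gap is that your inductive crossing invariant is mis-stated. As written, ``$T(R_a)\cap R_b$ contains a sub-strip of $R_b$ that fully crosses $\mathcal{M}_1$ in the stable direction'' would force $T(R_a)$ to span the entire $z$-extent of $R_b$, which is false in general (e.g.\ $T(R_1)\cap R_0$ occupies only a slice of $R_0$). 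And even if one charitably reads it as ``$T(R_a)\cap R_b$ is a quadrangular crossing rectangle,'' that is still weaker than what propagates under the iteration. The invariant the paper isolates (its ``Claim'') is at the level of \emph{preimages} and is measured against \emph{both} unstable rectangles simultaneously: the stable sides of $R_i\cap T^{-1}(R_j)$ ($i,j\in\{0,1\}$) must cross both unstable sides of both $T(R_0)$ and $T(R_1)$. Without that stronger formulation the step ``pull back a crossing sub-rectangle of $T(R_i)$, intersect again with $T(R_0)\cup T(R_1)$ and with $R_k$, pull back again'' does not close.

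The second gap is that you defer the verification of the inequalities, but that is precisely where $[2/3,3/4]$ is pinned down, and the two endpoints have distinct origins that your ``off-diagonal transitions fail outside the interval'' heuristic misidentifies. The lower bound $m\ge2/3$ is what makes all four of $R_i\cap T(R_j)$ quadrangular rather than triangular or pentagonal. The upper bound $m\le3/4$ is a separate phenomenon caused by $R_0$ being topologically a triangle (it has only one genuine stable side, $r_0$, and no second unstable side): one must check that the upper-left corner $T^{-1}(X_1(m))$ of $R_0\cap T^{-1}(R_1)$ lies outside $T(R_1)$, i.e.\ that $8(m-1)/\sqrt{9-8m}\le -2/\sqrt{9-8m}=Ixz_1(m)$, which is equivalent to $m\le3/4$. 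Thus it is the transitions \emph{out of $R_0$} -- not the off-diagonal ones -- that are delicate; the two transitions out of $R_1$ hold automatically once quadrangularity is in place, because both unstable sides of $R_1$ already lie on $\partial\mathcal{M}_1$. Without having pinpointed $R_0$'s triangular nature as the source of the difficulty, the ``monotonicity plus endpoint evaluation'' plan would not know which inequality to write down.
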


\begin{proof}
We will call a region quadrangular if it is diffeomorphic to a square and we will refer to such a region as a (curvilinear) rectangle if it is bounded by two stable (increasing) and two unstable (decreasing) curves. These will be referred to as the stable and the unstable sides of the rectangle, respectively.

Consider the geometry of the sets $R_i \cap T(R_j)$ for $i,j \in \{0,1\}$. While $R_0 \cap T(R_0)$ is quadrangular for every $m \in [1/2,1)$ this does not necessarily hold for the other three sets. They can be triangular, quadrangular or pentagonal (in the previous topological sense) depending on the value of the mass parameter $m$. The different possible cases are shown on Figure \ref{RicapRj}.
\begin{figure}
\centering
\subfigure[$1/2 < m \leq 7/12$]{\includegraphics[width=0.3\textwidth]{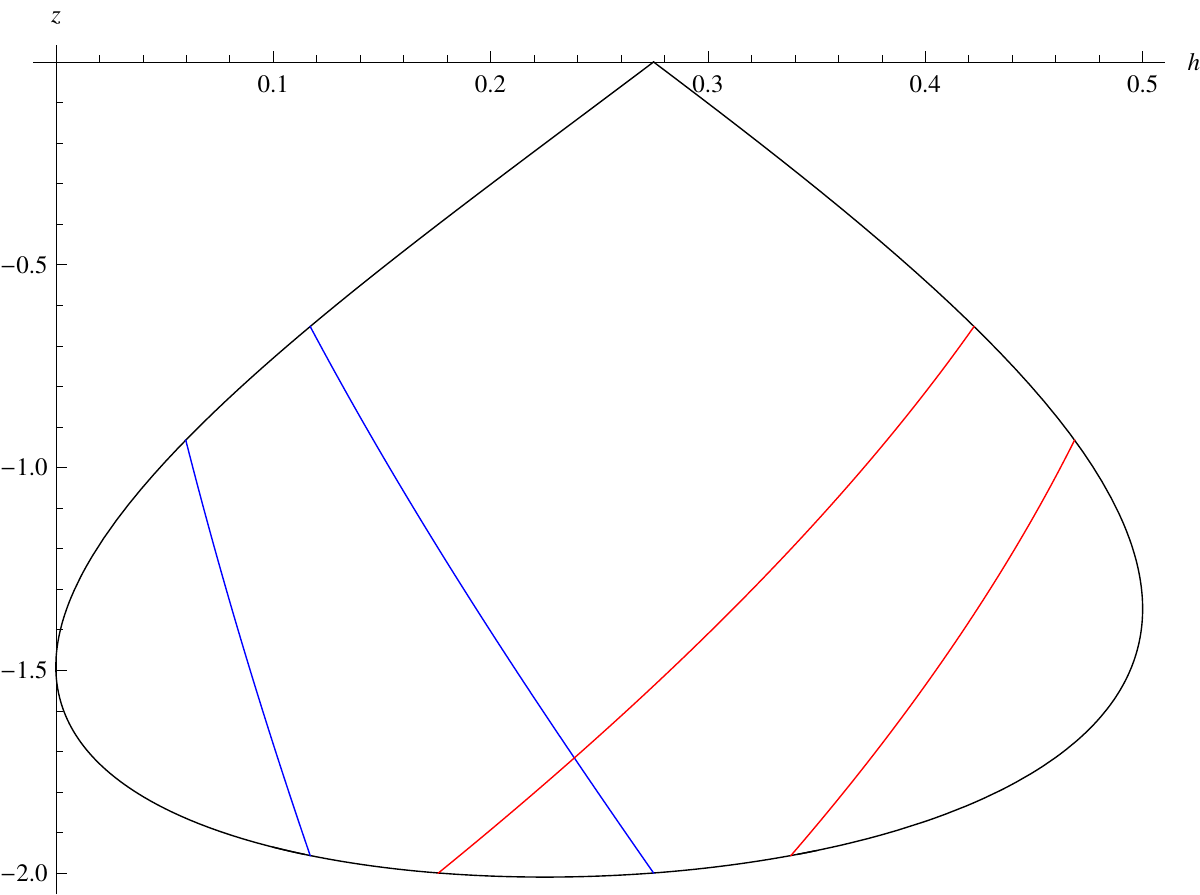} \label{triangular}} \quad
\subfigure[$7/12 < m < 2/3$]{\includegraphics[width=0.3\textwidth]{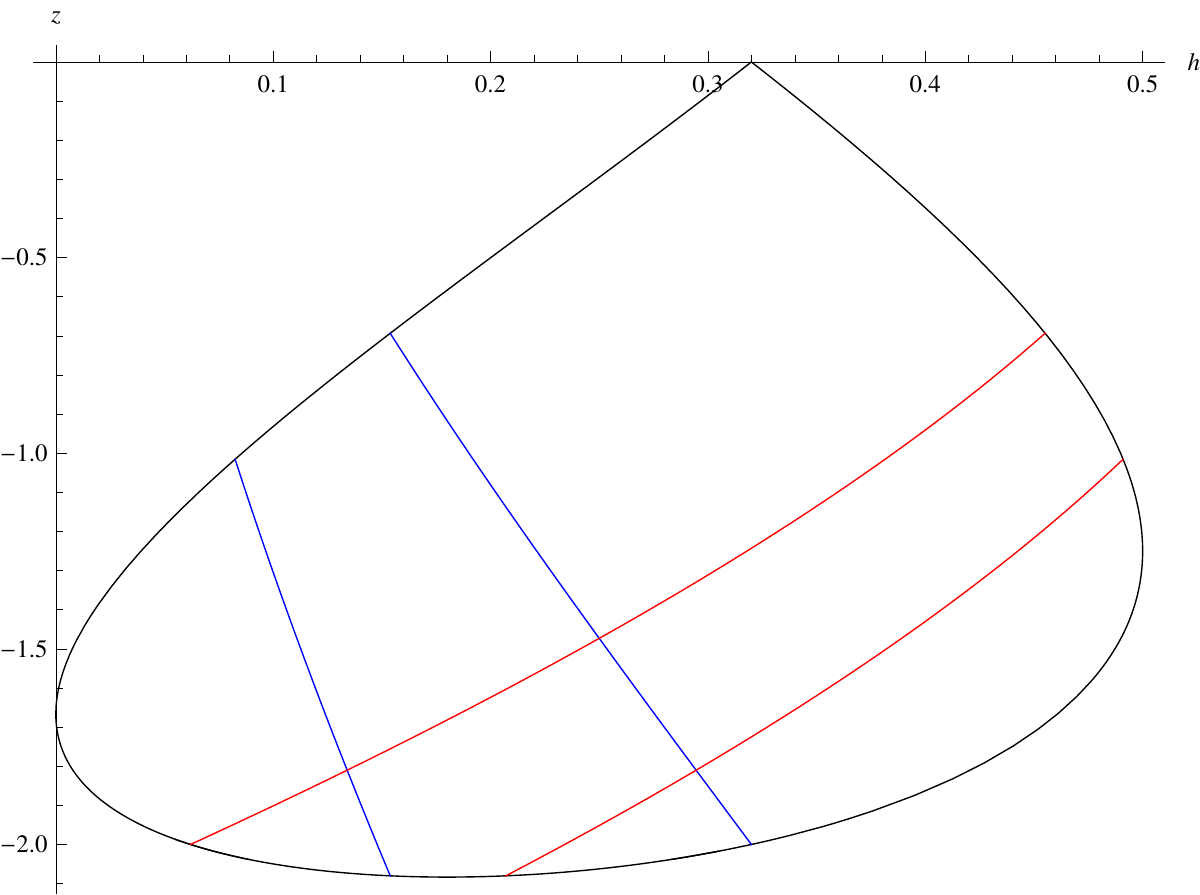} \label{pentagonal}} \quad
\subfigure[$2/3 \leq m < 1$]{\includegraphics[width=0.3\textwidth]{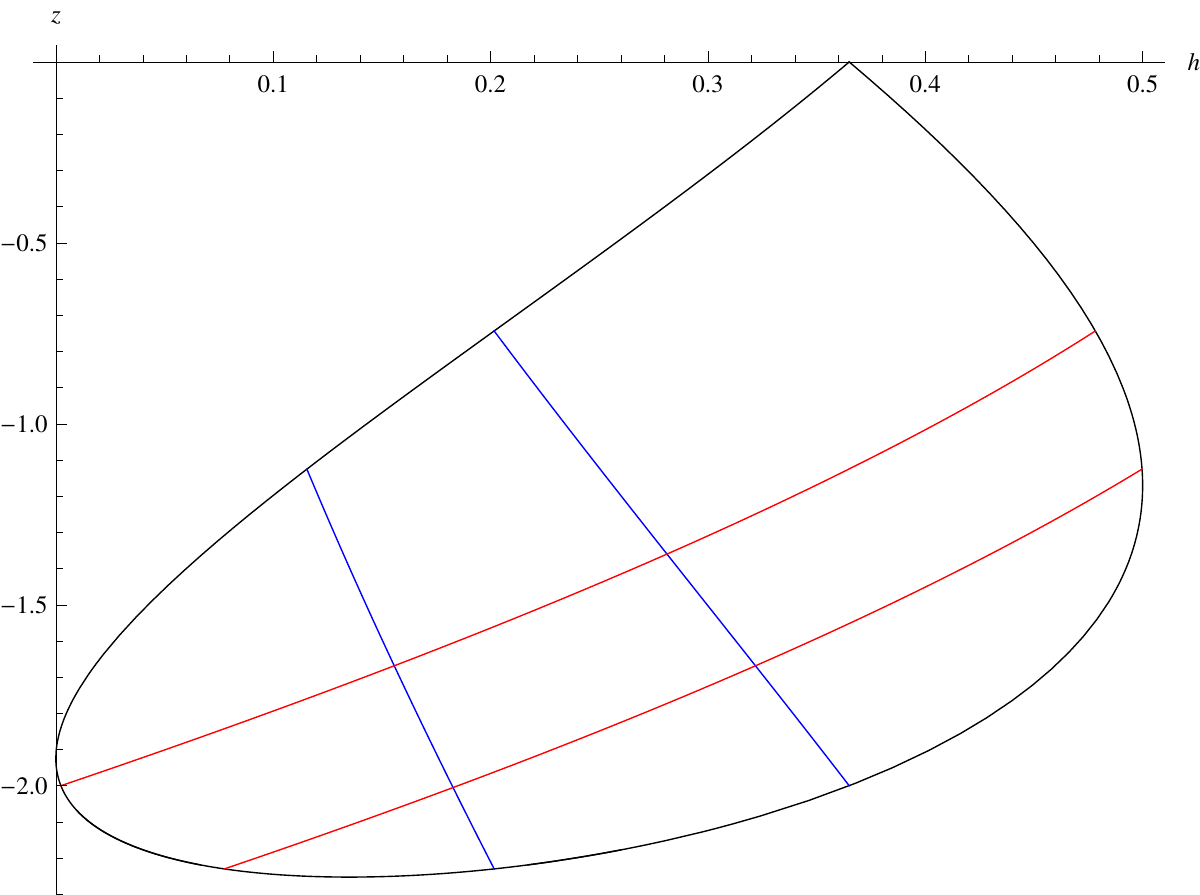} \label{quadrangular}}
\caption{The geometry of the sets $R_i \cap T(R_j)$ for $i,j \in \{0,1\}$.}
\label{RicapRj}
\end{figure}
We will work only with the case \ref{quadrangular}, when $2/3 \leq m$ and hence all four regions are quadrangular. The sets of points that correspond to the symbolic representations $\{ \underline{x} : x_0=0\}$ and $\{ \underline{x} : x_0=1\}$ are of course $R_0$ and $R_1$. We go one step further and identify the sets corresponding to symbolic representations $\{ \underline{x} : x_0,x_1 \in \{0,1\} \}$. Formally they are $R_i \cap T^{-1}(R_j)$ for $i,j \in \{0,1\}$, i.e. the preimages of the four quadrangular regions we have just discussed, but what is important is again the geometry of these sets, see Figure \ref{subrectangle}. They form (curvilinear) subrectangles
within $R_0$ or $R_1$, fully crossing them in the stable direction.
\begin{figure}
\centering
\includegraphics[width=0.75\textwidth]{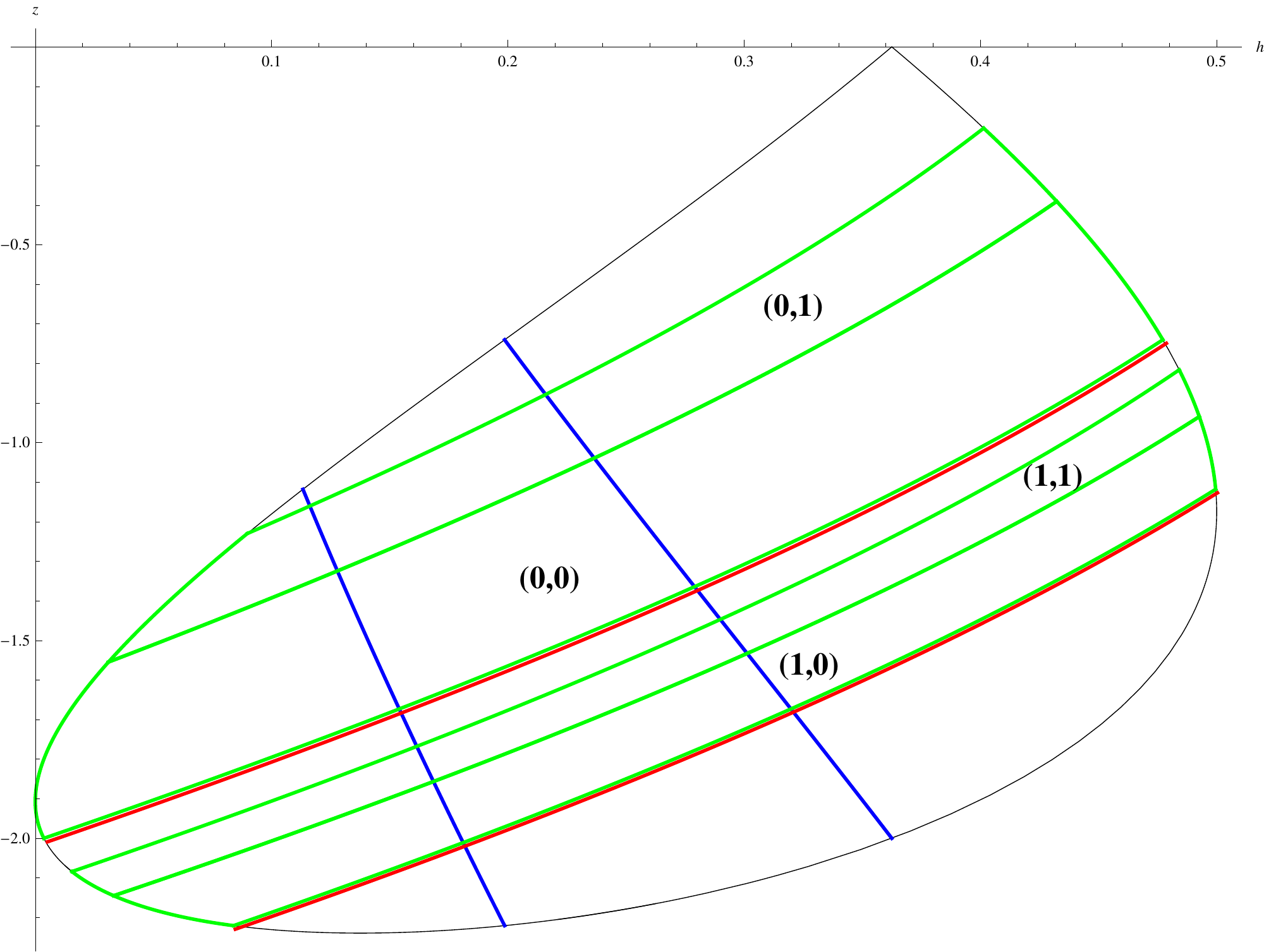}
\caption{The green rectangle with index $(i,j)$ is the set $R_i \cap T^{-1}(R_j)$.}
\label{subrectangle}
\end{figure}

Actually, in what follows we prove a somewhat stronger claim.

\textit{Claim} The stable sides of the rectangles $R_i \cap T^{-1}(R_j)$ for $i,j \in \{0,1\}$ cross both unstable sides of both of the rectangles $T(R_0)$ and $T(R_1)$.

In fact this holds automatically in case of the regions $R_1 \cap T^{-1}(R_0)$ and $R_1 \cap T^{-1}(R_1)$, because their stable sides are the preimages of segments of certain singularity curves -- $r_0$, $r_1$ and the segment of $\partial \mathcal{M}_1$ that forms the top edge of $T(R_1)$ -- which connect the two unstable sides of $T(R_1)$. Under the action of the inverse dynamics these unstable sides map onto the unstable sides of $R_1$, which are bits of the boundary of the phase space. Consequently, the above mentioned segments of the singularities map onto stable curves stretching from side to side in $\mathcal{M}_1$ and crossing the unstable sides of both $T(R_0)$ and $T(R_1)$.

The situation for the regions $R_0 \cap T^{-1}(R_0)$ and $R_0 \cap T^{-1}(R_1)$ is slightly more complicated, essentially because $R_0$ is topologically a triangle and so we can not talk about its stable and unstable sides. Actually it may happen that the top edge of $R_0 \cap T^{-1}(R_1)$ does not intersect the left side of $T(R_1)$ as required. To identify the cases when this happens we calculate the left endpoint of this top edge and compare it to the top left corner of $T(R_1)$. As follows from our discussion in section \ref{s:setup}, Formulas \eqref{pointsXBx}, \eqref{pointsIxIbx} and \eqref{TonRn}, the upper left corner of $R_0 \cap T^{-1}(R_1)$ is the point $T^{-1}(X_1(m))$, its second coordinate is $\frac{8(m-1)}{\sqrt{9-8m}}$. The upper left corner of $T(R_1)$ is the point $Ix_1(m)$, it has second coordinate $\frac{-2}{\sqrt{9-8m}}$. It follows then that for $m < 3/4$ the point $T^{-1}(X_1(m))$ falls outside the region $T(R_1)$. Since $R_0 \cap T^{-1}(R_1)$ lies above the region $R_0 \cap T^{-1}(R_0)$ this implies that for $2/3 \leq m \leq 3/4$ the stable sides of both regions cross the unstable sides of both $T(R_0)$ and $T(R_1)$. This completes the proof of the Claim.

If we want to proceed one more digit and identify the set of points corresponding to sequences with the first three digits arbitrarily chosen from $\{0,1\}$, we should consider the preimages of the sets $T(R_i) \cap R_j \cap T^{-1}(R_k)$ for $i,j,k \in \{0,1\}$. Note that due to our previous argument these sets form subrectangles in $T(R_0)$ and $T(R_1)$ fully crossing them in the stable direction, but narrower in the unstable direction (see again Figure \ref{subrectangle}). This, together with the previous geometric observations, implies that the preimage of any of these subrectangles is a subrectangle in one of the regions $R_i \cap T^{-1}(R_j)$ for $i,j \in \{0,1\}$ fully crossing it in the stable direction. It follows that the process can be iterated showing that for any $m \in [2/3, 3/4]$ and any given one-sided infinite sequence $x_0, x_1, \dots \in \{0,1\}$ the projection of the set $\{ \underline{y} : y_i = x_i \, \forall i \in \mathbb{N} \}$ is not empty. It is actually a curve, moreover a fairly long local stable manifold, that crosses the phase space.

To complete the proof it is enough to take into account that the involution $I : \mathcal{M}_1 \to \mathcal{M}_1$ maps the set $\cup_{i,j \in \{0,1\}}R_i \cap T(R_j)$ to itself and stable manifolds into unstable ones. The action of $I$ (more precisely the action of it lifted up) on the symbolic space is given by
\[ I(\underline{x})_i = x_{-i-1}. \]
Hence for any sequence $\underline{x} \in \{0,1\}^{\mathbb{Z}}$ the projection $\pi(\underline{x})$ is realised as a physical configuration. More than that it has long local stable and unstable manifolds
\[ W^s(\pi(\underline{x})) = \pi(\{ \underline{y} :  y_i = x_i \, \forall i \in \mathbb{N} \}), \quad W^u(\pi(\underline{x})) = \pi(\{ \underline{y} :  y_i = x_i \, \forall i < 0 \}), \]
crossing the phase space.
\end{proof}

\subsection{Convergence of period ratios}

With the help of this symbolic subsystem constructed in the previous subsection we can find the periodic points needed for our purposes. First of all consider the two special sequences, one consisting of all zeroes the other of all ones. Let us denote them by $\underline{0}$ and $\underline{1}$, respectively. Their projections are

\begin{equation}\label{fixedpointprojection}
\pi(\underline{0}) = F_0(m) \quad \pi(\underline{1}) = F_1(m),
\end{equation}
where the $F_i(m)$'s are the fixed points identified in \eqref{fixedpoints}. We now define our sequence of periodic points for any $m \in [2/3,3/4]$ as

\begin{equation}\label{periodicpoints}
\begin{split}
P_n(m) := \pi(\underline{p}^n) \text{ ,where } & p_i^n=0 \text{ for } -n \leq i \leq n-1,\\ & p_i^n=1 \text{ for } n \leq i \leq n+n^2-1 \\ & \text{and } \underline{p} \text{ is periodic otherwise.}
\end{split}
\end{equation}
Defined in this way $P_n(m)$ has discrete time period $n^2+2n$ and since
\[s(F_0(m), P_n(m)) = n\]
it is exponentially close to $F_0(m)$ by Formula \eqref{metricsequiv}. The local stable manifold of $P_n(m)$ intersects the local unstable manifold of $F_0(m)$ and the same holds with stable and unstable roles exchanged (this is actually true for any two points with symbolic representations from $\{0,1\}^{\mathbb{Z}}$). From this, together with the second half of Lemma \ref{subsystem}, it follows that for $n$ large enough the points $P_n(m)$ and $F_0(m)$ can be represented on the base of the same Young-tower. What remains is to show that in terms of period in continuous time $P_n(m)$ behaves more and more like $F_1(m)$ as $n$ increases. We make this precise in the next two propositions.

\begin{prop}\label{C0convergence}
Let $\tau_k$ denote the $k$-th Birkhoff sum of $\tau$. Then for any $m \in [2/3,3/4]$ we have
\[ \frac{\tau_{n^2+2n}(P_n(m),m)}{(n^2+2n) \tau(F_0(m),m)} \to \frac{\tau(F_1(m),m)}{\tau(F_0(m),m)} \]
as $n \to \infty$ and the convergence is uniform in $m$.
\end{prop}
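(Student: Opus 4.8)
The plan is to read the statement as an averaging estimate. By the very definition \eqref{periodicpoints}, $P_n(m)$ has discrete period $k_n:=n^2+2n$, and along one full period its orbit visits $R_1$ exactly $n^2$ times (the indices $i$ with $p^n_i=1$) and $R_0$ exactly $2n$ times. So it suffices to prove
\[
\tau_{k_n}(P_n(m),m)=n^2\,\tau(F_1(m),m)+O(n),
\]
with the $O(n)$ uniform in $m\in[2/3,3/4]$: indeed
\[
\frac{\tau_{k_n}(P_n(m),m)}{k_n\,\tau(F_0(m),m)}
=\frac{n^2}{k_n}\cdot\frac{\tau(F_1(m),m)}{\tau(F_0(m),m)}
+\frac{\tau_{k_n}(P_n(m),m)-n^2\tau(F_1(m),m)}{k_n\,\tau(F_0(m),m)},
\]
where $\tfrac{n^2}{k_n}=\tfrac{n}{n+2}\to1$ and, by \eqref{periods}, $\tau(F_0(m),m)=\tfrac{2m}{\sqrt{3m-2m^2}}$ is bounded and bounded away from $0$ on the interval, so the error term is $O(n)/O(n^2)\to0$ uniformly in $m$.

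To establish the displayed estimate I would split the Birkhoff sum $\tau_{k_n}(P_n)=S_1+S_0$ according to whether $T^iP_n\in R_1$ or $T^iP_n\in R_0$. Since $\overline{\mathcal M_1}$ is bounded and the formula \eqref{dynamics} for $\tau|_{R_0}$ involves only $\sqrt F$, $\sqrt h$ and $z$ (all bounded there), $\tau$ is bounded on $\overline{R_0}$ by a constant $M_0$ uniform in $m$; hence $0<S_0\le 2nM_0=O(n)$. The real work is with $S_1$, the sum over the $n^2$ indices in the block of ones. For such an $i$ the symbolic sequence of $T^iP_n$ is the $i$-shift of $\underline p^n$, and a direct look at \eqref{periodicpoints} gives its separation time from $\underline 1$,
\[
s\bigl(T^iP_n,F_1(m)\bigr)=\min\{\,n+n^2-i,\ i-n+1\,\},
\]
so this quantity runs from $1$ up to roughly $n^2/2$ and, crucially, for each value $\sigma$ there are at most two indices $i$ with $s=\sigma$. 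By the metric equivalence \eqref{metricsequiv} (with $C,\theta$ chosen uniformly over the compact parameter interval, as allowed by the uniform hyperbolicity of \cite{BBNV}) we get $d(T^iP_n,F_1(m))\le C\theta^{s}$.

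Now fix $s_0$, independent of $m$, large enough that the $C\theta^{s_0}$-ball around $F_1(m)$ lies in a region where the explicit formula for $\tau|_{R_1}$ is $C^1$ with gradient bounded by some $L$ uniform in $m$; this is possible because $F_1(m)$ is an interior point of $R_1$ whose coordinates $h$ and $F$ stay bounded away from $0$, and $F_1(m)$ depends continuously on $m$. For the indices with $s\ge s_0$ we then have $|\tau(T^iP_n)-\tau(F_1(m))|\le LC\theta^{s}$, while the at most $2s_0$ remaining indices contribute at most $2M_1$ each, where $M_1=\sup_{m,\overline{R_1}}\tau<\infty$. Summing over all $n^2$ indices and using the "at most two per level" count,
\[
\bigl|S_1-n^2\tau(F_1(m),m)\bigr|\ \le\ \frac{2LC\theta^{s_0}}{1-\theta}+4M_1 s_0\ =\ O(1),
\]
uniformly in $m$ and in $n$ (once $n^2\ge 2s_0$). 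Combined with $S_0=O(n)$ this yields the displayed estimate, hence the proposition.

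The only genuinely delicate point, and the reason the argument is not a one-line law of large numbers, is that $\tau$ is merely piecewise $C^2$ with derivatives that blow up near the singularity set and near $\{h=0\}$, so one cannot simply Lipschitz-estimate all $n^2$ terms against $\tau(F_1(m))$. The separation-time count — at most two orbit points at each distance level $\theta^s$ from $F_1(m)$ — is exactly what confines the Lipschitz estimate to a fixed neighbourhood of $F_1(m)$ while absorbing the finitely many exceptional orbit points into a harmless $O(1)$ error. Everything else (boundedness of $\tau$ on $\overline{R_0},\overline{R_1}$, positivity of $\tau(F_0(m),m)$, and the uniformity of $C$, $\theta$, $L$, $M_0$, $M_1$ over the compact parameter interval) is routine and requires no further calculation.
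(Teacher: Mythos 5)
Your proof is correct and follows essentially the same route as the paper: split the Birkhoff sum into the $O(n)$ ``boundary'' terms (the zeros in the symbolic word), handled by uniform boundedness of $\tau$, and the $n^2$ ``interior'' terms, where $\tau(T^iP_n)$ is compared to $\tau(F_1)$ via a Lipschitz/mean-value estimate whose error is controlled by the separation time $\min\{i+1-n,\ n^2+n-i\}$ and summed as two geometric series. The only cosmetic difference is that you localize the Lipschitz bound to a small $\theta^{s_0}$-ball around $F_1(m)$ and absorb the finitely many closer-in indices into an $O(s_0)$ constant, whereas the paper computes an explicit global gradient bound $G_\infty$ for $\nabla_{h,z}\tau$ on the whole domain $(R_0\cup R_1)\cap(T(R_0)\cup T(R_1))$ (using the corner-point formulas to bound $h$, $F$, $z$ away from the singular set); both variants are valid.
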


\begin{proof}
First we show that $\tau$ is uniformly bounded on the set $(R_0 \cup R_1) \cap (T(R_0) \cup T(R_1))$, cf.~\eqref{taumax}.
Recall the formula for $\tau$ given in \eqref{dynamics}. On the phase space we have the following trivial bounds
\[ h \leq 1/2 \quad z \leq 0. \]
The relation between the first coordinate of the involution and the quantity $F$ (see \eqref{involformula}), together with the previous bound on $h$ implies that $F \leq 1/(2m)$. Finally on the set we are working on $n=0$ or $1$. Substituting the previous bounds into \eqref{dynamics} gives that

\begin{equation}\label{taumax}
\tau(h,z,m) \leq \frac{4}{\sqrt{m}} \leq \frac{4 \sqrt{3}}{\sqrt{2}}.
\end{equation}


We want to relate the value of the roof function $\tau$ along the trajectory of $P_n(m)$ to $\tau(F_1(m),m)$. By construction the trajectory of $P_n(m)$ gets exponentially close to $F_1(m)$ and stays close to this fixed point for an order of $n^2$ iterations. Hence to compare the two values we are going to perform a first order expansion of $\tau$ in the space variables (not in the parameter $m$).

The relevant partial derivatives are

\begin{equation}\label{partialderivatives}
\begin{split}
\frac{\partial \tau}{\partial h} = \frac{1}{\sqrt{2 m h}} + (2n+1) \frac{-1}{m \sqrt{2 F}} \\
\frac{\partial \tau}{\partial z} = 2(1-m) + (2n+1) \frac{2 \alpha z}{\sqrt{2 F}},
\end{split}
\end{equation}
where again $F$ and $\alpha$ are as in \eqref{Fandalpha}. To estimate these quantities on the set $(R_0 \cup R_1) \cap (T(R_0) \cup T(R_1))$ we use again that $n=0$ or $1$. Furthermore, we need lower bounds on $h$ and $F$ (because they appear in some denominators) and also on $z$ (because $\alpha \leq 0$ for $m \in [1/2,1]$). Now a lower bound on $h$ can be the first coordinate of the leftmost point of our domain, which is $Ixh_1(m) = \frac{m}{2(9-8m)}$. Using again the relation in \eqref{involformula}, together with the fact that our domain is mapped onto itself by $I$, we get that $F \geq \frac{1}{2(9-8m)}$. Finally the minimum of $z$ is given by the second coordinate of the lowest point of the domain. Since this is a point of $r_1$, which is an increasing curve, it can be further estimated by the second coordinate of the left endpoint of $r_1$, which is $Bxz_1(m) = \frac{-3}{\sqrt{4-3m}}$. It is then clear that all estimates are continuous functions of $m$ on $[2/3,3/4]$ (actually on the whole parameter domain $[1/2,1]$) and all denominators in \eqref{partialderivatives} are separated from $0$, hence there is a uniform bound on $\left\| \nabla_{h,z} \tau \right\|$. Let us denote this bound by $G_{\infty}$.

Now by the mean value theorem the continuous time period of $P_n(m)$ can be expressed as

\begin{equation}\label{taufirstexpand}
\begin{split}
\tau_{n^2+2n}(P_n(m),m) = & \tau_{n}(P_n(m),m) + \tau_{n}(T^{n^2+n}P_n(m),m) + n^2 \tau(F_1(m),m) + \\ & + \sum\limits_{i=n}^{n^2+n-1} D_{\underline{v}_i}\tau(\xi_i,m) \cdot d(F_1(m), T^iP_n(m)).
\end{split}
\end{equation}
Here $\underline{v}_i$ is the unit vector parallel to the line connecting $F_1(m)$ and $T^iP_n(m)$, $D_{\underline{v}_i}$ denotes differentiating in the $\underline{v}_i$ direction and $\xi_i$ is the point on the segment specified by the mean value theorem. Apart from the bounds \eqref{taumax} and those given on the gradient of $\tau$ we will also estimate $d(F_1(m), T^iP_n(m))$ using \eqref{metricsequiv}. For this note that
\[ s(F_1(m), T^iP_n(m)) = \min\{i-n, n^2+n-i\} \quad \text{for } n \leq i \leq n^2+n, \]
so the sum of the distances in \eqref{taufirstexpand} can be estimated by segments of two convergent geometric series (one for the indices $n \leq i \leq n^2/2+n$ and the other one for $n^2/2+n \leq i \leq n^2+n-1$), both with quotient $\theta$. Hence we have the overall estimate

\begin{multline*}
\left| \frac{\tau_{n^2+2n}(P_n(m),m)}{(n^2+2n) \tau(F_0(m),m)} - \frac{\tau(F_1(m),m)}{\tau(F_0(m),m)} \right| \leq \\
\leq \left| \frac{n^2 \tau(F_1(m),m)}{(n^2+2n) \tau(F_0(m),m)} - \frac{\tau(F_1(m),m)}{\tau(F_0(m),m)} \right| + \\
+ \frac{2n \cdot 4 \sqrt{3/2}}{(n^2+2n) \tau(F_0(m),m)} + \frac{G_{\infty} 2C (1-\theta^{n^2/2+1})}{(n^2+2n) \tau(F_0(m),m) (1-\theta)},
\end{multline*}

from which the statement of the proposition immediately follows.
\end{proof}

To complete our argument we have to show that not only the ratio of the two periods converge but also its derivative with respect to the parameter $m$, as expressed in the following Proposition.

\begin{prop}\label{C1convergence}
For any $m \in [2/3,3/4]$ we have
\[\frac{d}{dm}\left( \frac{\tau_{n^2+2n}(P_n(m),m)}{(n^2+2n) \tau(F_0(m),m)} \right) \to \frac{d}{dm}\left( \frac{\tau(F_1(m),m)}{\tau(F_0(m),m)} \right)\]
as $n \to \infty$ and the convergence is uniform in $m$.
\end{prop}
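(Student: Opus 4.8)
The plan is to differentiate the identity \eqref{taufirstexpand} with respect to the parameter $m$ and then estimate the resulting terms one by one, exactly as in the proof of Proposition \ref{C0convergence}, but now keeping track of the $m$-dependence. Since $P_n(m)=\pi(\underline p^n)$ depends on $m$ only through the projection of a fixed symbolic word, and the symbolic subsystem is structurally stable on the whole interval $[2/3,3/4]$, the point $P_n(m)$ and all of its iterates $T^iP_n(m)$ depend smoothly on $m$; the key quantitative input that we need is that their $m$-derivatives are bounded uniformly in both $m$ and $n$. This follows from the uniform hyperbolicity and the exponential contraction/expansion along the orbit: writing $T=T_m$ to emphasize parameter dependence, the orbit segments of $P_n(m)$ track the fixed points $F_0(m)$ and $F_1(m)$ with errors that decay geometrically in the separation time, and the same geometric decay governs how $\partial_m T^iP_n(m)$ compares with $\partial_m F_0(m)$ or $\partial_m F_1(m)$ — the contributions of the boundary portions of the word are summable in a geometric series with ratio $\theta$ (or a comparable factor coming from the contraction rate of $T_m$).

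Concretely, I would proceed as follows. First, record that $F_0(m)$, $F_1(m)$, $\tau(F_0(m),m)$ and $\tau(F_1(m),m)$ are explicit $C^1$ functions of $m$ on $[2/3,3/4]$ by \eqref{fixedpoints} and \eqref{periods}, with $\tau(F_0(m),m)$ bounded away from $0$, so that $\frac{d}{dm}\bigl(\tau(F_1(m),m)/\tau(F_0(m),m)\bigr)$ is a fixed bounded continuous function. Second, establish the uniform bound $\sup_{m,n,i}\|\partial_m\, T^iP_n(m)\|\le M<\infty$ and, more precisely, the geometric estimate that for $n\le i\le n^2+n$ one has $\|\partial_m(T^iP_n(m)-F_1(m))\|\le C'\theta^{\,\min\{i-n,\,n^2+n-i\}}$ (and an analogous statement near $F_0(m)$ for the boundary blocks $-n\le i\le n-1$ and $n^2+n\le i\le n^2+2n-1$). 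Third, differentiate \eqref{taufirstexpand}: the term $n^2\tau(F_1(m),m)$ differentiates to $n^2\frac{d}{dm}\tau(F_1(m),m)$, which after dividing by $(n^2+2n)\tau(F_0(m),m)$ converges (uniformly in $m$) to $\frac{d}{dm}\bigl(\tau(F_1(m),m)/\tau(F_0(m),m)\bigr)$ together with the contribution coming from differentiating the denominator; the boundary Birkhoff sums $\tau_n(P_n(m),m)$ and $\tau_n(T^{n^2+n}P_n(m),m)$, together with their $m$-derivatives, are $O(n)$ (using \eqref{taumax}, the bounds on $\nabla_{h,z}\tau$, a uniform bound on $\partial_m\tau$ on the relevant domain, and the chain rule), hence give an $O(1/n)$ contribution after normalization; and the remainder sum $\sum_{i=n}^{n^2+n-1}D_{\underline v_i}\tau(\xi_i,m)\,d(F_1(m),T^iP_n(m))$ differentiates, by the product and chain rules, into finitely many terms each of which is bounded by a constant times $\sum_i\theta^{\,\min\{i-n,\,n^2+n-i\}}$, hence $O(1)$, hence again $o(1)$ after dividing by $n^2+2n$. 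Collecting these estimates yields the claimed convergence with a bound uniform in $m$.

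The main obstacle I anticipate is the second step: proving the uniform-in-$(m,n)$ bound on $\partial_m T^iP_n(m)$ and its geometric decay toward $\partial_m F_1(m)$. One cannot simply iterate $\partial_m(T^{i+1}P)=\partial_m T_m\bigl(T^iP\bigr)+DT_m|_{T^iP}\cdot\partial_m(T^iP)$ and hope the product of Jacobians stays bounded, because $DT_m$ expands in the unstable direction; the correct approach is to split $\partial_m(T^iP_n-F_1)$ into stable and unstable components and propagate the stable part forward (it contracts) and the unstable part backward from the other end of the block (it contracts under $T^{-1}$), using that $P_n(m)$ is defined so that its orbit agrees symbolically with $\underline 1$ for $\sim\!n^2$ steps in the middle and with $\underline 0$ on both wings. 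This is precisely the same cone/contraction bookkeeping that underlies the estimate $s(F_1(m),T^iP_n(m))=\min\{i-n,n^2+n-i\}$ used in \eqref{taufirstexpand}, now applied at the level of derivatives in $m$ rather than distances in phase space; making the dependence of the implied constants on $m$ uniform requires only that the hyperbolicity constants and cone transversality from \cite{BBNV} are uniform on the compact interval $[2/3,3/4]$, which they are. Once this derivative estimate is in hand, the rest is a routine repetition of the telescoping argument from Proposition \ref{C0convergence}.
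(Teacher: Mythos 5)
Your outline points in the right direction --- the reduction to controlling $\frac{d}{dm}T^iP_n(m)$, and the idea of splitting into stable and unstable components propagated in opposite time directions, is exactly the mechanism the paper uses --- but two points leave genuine gaps. First, the route via differentiating \eqref{taufirstexpand} in $m$ is not rigorous as stated: the intermediate points $\xi_i$ produced by the mean value theorem are not canonically defined and need not depend differentiably on $m$, so you cannot apply the product and chain rules to the remainder sum term by term. The paper avoids this by first reducing (via the quotient rule and Proposition \ref{C0convergence}) to Proposition \ref{C1convsimplified} and then working with the exact identity $\frac{d}{dm}\tau_{n^2+2n}(P_n(m),m)=\sum_i\langle\nabla_{h,z,m}\tau(P_n^i(m),m),(\frac{d}{dm}P_n^i(m),1)\rangle$ (its \eqref{roofsumderiv}), comparing each term with the corresponding expression at $F_1(m)$; you should replace your differentiated mean-value expansion by this.

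Second, and more seriously, your ``second step'' --- the uniform bound $\sup_{m,n,i}\|\partial_m T^iP_n(m)\|<\infty$ together with the geometric decay of $\partial_m T^iP_n(m)-\partial_m F_1(m)$ --- is precisely where essentially all of the work lies, and your sketch omits the ingredients needed to close it. The paper obtains a closed formula for $\frac{d}{dm}P_n^k(m)$ by implicit differentiation of the periodicity relation $T^{n^2+2n}P_n^k=P_n^k$ (formula \eqref{periodicderiv}); boundedness (Lemma \ref{boundedderiv}) then requires decomposing each term in the eigenbasis of $DT^{n^2+2n}(P_n^k)$ and exploiting that the product of the stable and unstable eigenvalues equals $1$, so that the a priori divergent products compensate each other. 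More importantly, comparing $\frac{d}{dm}P_n^i(m)$ with $\frac{d}{dm}F_1(m)$ forces you to compare the stable and unstable \emph{directions} (and the multipliers $\mu(\cdot)$, $\lambda(\cdot)$) along the orbit of $P_n$ with those at $F_1$; this requires the dynamical H\"older continuity of the invariant splitting (Lemma \ref{stableunstabledhc}), which is not an off-the-shelf fact for this piecewise smooth system and which the authors prove separately in the Appendix. Your appeal to ``the same geometric decay'' that controls phase-space distances does not by itself control parameter-derivatives of the orbit, because those are defined through the inverse of $I-DT^{n^2+2n}$ and are sensitive to the variation of the splitting, not merely to the distance between the points. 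Until this estimate is carried out, the proposal is a plausible plan rather than a proof.
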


The proof of Proposition~\ref{C1convergence} requires more work, hence we move it to subsection~\ref{s:technical}.

Now we are in a position to prove our first result.

\begin{proof}[Proof of Theorem \ref{aediophantine}]
We have the sequence of special periodic points $P_n(m)$ converging to $F_0(m)$ for every $m \in [2/3,3/4]$. As stated in Theorem \ref{youngtower}, in \cite{BBNV} we showed the existence of an open interval in $(1/2,3/4)$ such that for every $m \in I$ the discrete time map $T : \mathcal{M}_1 \to \mathcal{M}_1$ can be modelled by a Young tower with exponential tails. The size of such a tower is determined by certain constants of the system, for example uniform bounds on the curvatures of unstable manifolds and of singularity curves, and also the minimum rates of expansion and contraction. What is important is that for $m \in [2/3,3/4]$ these constants can be chosen independently of $m$, since the dynamics is $C^2$ and the dependence on $m$ is continuous. Then we have a tower with uniform size, hence for $n$ large enough $P_n(m)$ and $F_0(m)$ are close enough to each other and so, as we have shown, they can be represented on the same Young-tower for any value of $m$. Applying the result of Melbourne (\cite{M}) it is then enough to prove that the ratio of the continuous time periods of $P_n(m)$ and $F_0(m)$ (the latter considered under $(n^2+2n)$ iterations of $T$) is Diophantine. Propositions \ref{C0convergence} and \ref{C1convergence} imply that this ratio as a function of $m$ converges in the $C^1$ topology to the function in \eqref{periodratio}. The limit is a $C^1$, strictly decreasing function for $m \in [2/3,3/4]$, hence by further increasing the value of $n$ (if necessary) the ratio $\tau_{n^2+2n}(P_n(m),m)/((n^2+2n)\tau(F_0(m),m))$ will also be strictly decreasing and therefore indeed Diophantine for almost every $m$ in this interval.
\end{proof}

\subsection{Proof of Proposition \ref{C1convergence} \label{s:technical}}

Differentiating the fractions appearing in the statement of the Proposition we get

\begin{gather*}\label{ratioderivatives}
\frac{d}{dm}\left( \frac{\tau_{n^2+2n}(P_n(m),m)}{(n^2+2n) \tau(F_0(m),m)} \right) = \\ = \frac{\tau(F_0(m),m) \frac{d}{dm}\tau_{n^2+2n}(P_n(m),m) - \tau_{n^2+2n}(P_n(m),m) \frac{d}{dm}\tau(F_0(m),m)}{(n^2+2n)\tau^2(F_0(m),m)} \\\\
\frac{d}{dm}\left( \frac{\tau(F_1(m),m)}{\tau(F_0(m),m)} \right) = \\ = \frac{\tau(F_0(m),m) \frac{d}{dm}\tau(F_1(m),m) - \tau(F_1(m),m) \frac{d}{dm}\tau(F_0(m),m)}{\tau^2(F_0(m),m)}.
\end{gather*}
Hence, by Proposition \ref{C0convergence} it is enough to prove that

\begin{prop}\label{C1convsimplified}
\[\frac{1}{n^2+2n}\frac{d}{dm}\tau_{n^2+2n}(P_n(m),m) \to \frac{d}{dm}\tau(F_1(m),m),\]
as $n \to \infty$.
\end{prop}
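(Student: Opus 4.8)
The plan is to differentiate the Birkhoff sum $\tau_{n^2+2n}(P_n(m),m)=\sum_{i=0}^{n^2+2n-1}\tau\bigl(T^iP_n(m),m\bigr)$ term by term with respect to $m$, treating each summand via the chain rule as
\[
\frac{d}{dm}\tau\bigl(T^iP_n(m),m\bigr)=(\partial_m\tau)\bigl(T^iP_n(m),m\bigr)+\nabla_{h,z}\tau\bigl(T^iP_n(m),m\bigr)\cdot\frac{d}{dm}\bigl(T^iP_n(m)\bigr).
\]
The first (explicit) term is handled just as in Proposition~\ref{C0convergence}: $\partial_m\tau$ is a continuous function on the compact set $(R_0\cup R_1)\cap(T(R_0)\cup T(R_1))$ with denominators bounded away from zero (using the same lower bounds $h\ge Ixh_1(m)$, $F\ge\frac{1}{2(9-8m)}$, $z\ge Bxz_1(m)$), hence uniformly bounded; and along the trajectory of $P_n(m)$ the point $T^iP_n(m)$ is exponentially close to $F_1(m)$ for $n\le i\le n^2+n$ (separation time $\min\{i-n,n^2+n-i\}$), so the sum of $\partial_m\tau$ over those indices is $n^2\,\partial_m\tau(F_1(m),m)+O(1)$, while the $O(n)$ indices spent near $F_0$ contribute $O(n)$; dividing by $n^2+2n$ gives $\partial_m\tau(F_1(m),m)$ in the limit.

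The real work is the second term, which requires controlling the parameter-derivative of the orbit, i.e. the vectors $w_i:=\frac{d}{dm}\bigl(T^iP_n(m)\bigr)\in\mathbb{R}^2$. These satisfy the inhomogeneous linear recursion
\[
w_{i+1}=DT\bigl(T^iP_n(m)\bigr)\,w_i+(\partial_m T)\bigl(T^iP_n(m)\bigr),
\]
and, crucially, the "initial" condition is not free: $P_n(m)=\pi(\underline{p}^n)$ is the unique point with the prescribed bi-infinite symbolic itinerary, so $w_i$ is pinned down by requiring that it stay bounded under both forward and backward iteration. The right way to see this is to split $w_i$ into its components along the (one-dimensional) unstable and stable directions at $T^iP_n(m)$: the stable component is determined by summing the forced terms forward (where $DT$ contracts along stable directions), the unstable component by summing backward (where $DT^{-1}$ contracts along unstable directions). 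Concretely, $w_i=\sum_{j<i}\bigl(\prod_{k=j+1}^{i-1}DT\bigr)\Pi^s_j(\partial_mT)_j-\sum_{j\ge i}\bigl(\prod_{k=i}^{j-1}DT\bigr)^{-1}\Pi^u_j(\partial_mT)_j$, where $\Pi^{s/u}$ are the projections onto stable/unstable directions along the complementary one; uniform hyperbolicity of $T$ (from \cite{BBNV}) makes both series converge geometrically, so $\|w_i\|$ is bounded uniformly in $i$ and $n$, and moreover $\|\partial_m T\|$ is uniformly bounded on the relevant compact set. I would then observe that for $i$ in the "bulk" of the $1$-block, $n+L\le i\le n^2+n-L$ with $L\sim\log n$, the orbit has been in a small neighbourhood of $F_1(m)$ for a long time on both sides, so $w_i$ converges (geometrically in $L$, uniformly in $m$) to the corresponding stationary solution $w_\infty(m):=\frac{d}{dm}(F_1(m))$ of $w_\infty=DT(F_1)w_\infty+\partial_mT(F_1)$ — this is exactly the implicit-function-theorem derivative of the fixed point. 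Hence $\nabla_{h,z}\tau(T^iP_n(m))\cdot w_i\to\nabla_{h,z}\tau(F_1(m))\cdot\frac{d}{dm}F_1(m)$ for all but $O(\log n)$ of the relevant indices, the exceptional ones contribute $O(\log n)$ to the sum, and the $O(n)$ indices near $F_0$ contribute $O(n)$; dividing by $n^2+2n$ kills all the error terms.

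Putting the two pieces together yields
\[
\frac{1}{n^2+2n}\frac{d}{dm}\tau_{n^2+2n}(P_n(m),m)\longrightarrow\partial_m\tau(F_1(m),m)+\nabla_{h,z}\tau(F_1(m),m)\cdot\frac{d}{dm}F_1(m)=\frac{d}{dm}\tau(F_1(m),m),
\]
with all convergences uniform in $m\in[2/3,3/4]$ because every bound used (hyperbolicity constants, gradients of $\tau$, $\partial_mT$, and the geometric rates $\theta$) is continuous in $m$ on this compact interval and hence uniform. The main obstacle is the middle paragraph: making rigorous the claim that the parameter-derivative of the \emph{non-free} periodic orbit is given by the stable-forward/unstable-backward summation formula and that it converges to the fixed-point derivative $\frac{d}{dm}F_1(m)$ — this is a quantitative shadowing/structural-stability statement for the family $T=T(\cdot,m)$, and care is needed to control the products of $DT$ along an orbit that is only \emph{close} to $F_1$, not equal to it. Everything else reduces to the uniform-boundedness bookkeeping already carried out in Proposition~\ref{C0convergence}.
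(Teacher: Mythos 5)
Your skeleton matches the paper's proof almost exactly: the paper also writes $\frac{d}{dm}\tau_{n^2+2n}(P_n(m),m)$ as a sum of chain-rule terms (its formula \eqref{roofsumderiv}), obtains the orbit derivative $\frac{d}{dm}P_n^k(m)$ by implicitly differentiating the periodicity condition $T^{n^2+2n}P_n^k(m)=P_n^k(m)$ (formula \eqref{periodicderiv}, which is the periodic-orbit version of your ``bounded solution of the inhomogeneous cocycle equation''), proves uniform boundedness of these vectors by decomposing along the stable and unstable eigendirections and summing geometric series (Lemma \ref{boundedderiv}), and then reduces everything to showing that $\frac{d}{dm}P_n^i(m)-\frac{d}{dm}F_1(m)$ is small for $i$ in the bulk of the $1$-block. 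So the route is the same; the issue is that you stop exactly at the step you yourself flag as ``the main obstacle'' and do not supply the ingredient that closes it.

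That missing ingredient is concrete: to compare the two summation formulas for $w_i$ and for $\frac{d}{dm}F_1(m)$ term by term, one must compare (i) the stable/unstable unit vectors $s(P_n^{i}(m)),u(P_n^{i}(m))$ with $s(F_1(m)),u(F_1(m))$, (ii) the per-step contraction/expansion factors $\mu(\cdot),\lambda(\cdot)$ along the orbit of $P_n$ with those at $F_1$, and (iii) the coefficients of $\frac{\partial T}{\partial m}$ in the two different bases. All three require a quantitative, \emph{dynamical} H\"older continuity of the invariant directions: $\|u(x)-u(y)\|\leq C\gamma^{s(x,y)}$ when the orbits of $x$ and $y$ shadow each other through $R_0\cup R_1$ for $s(x,y)$ steps. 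This is the paper's Lemma \ref{stableunstabledhc}, proved in the Appendix by showing that $DT$ contracts angles inside the unstable cone (via the explicit form of the Jacobian \eqref{jacobian}) and iterating, splitting at time $s_-(x,y)/2$. Without this lemma your claim that the products of $DT$ along the nearby orbit converge to powers of $DT(F_1)$ has no proof --- the mere fact that $P_n^i(m)$ is close to $F_1(m)$ does not by itself control products of length $\sim n^2$ of nearby but unequal matrices; what saves the argument is that each difference enters weighted by a contracting factor, and the paper's bookkeeping (formulas \eqref{coeffrelationagain} and \eqref{dynholder}) makes that precise. Your proposal is therefore a correct outline of the paper's strategy with the decisive technical lemma identified but left unproved.
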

To show this we first derive a useful formula for the derivative. Before that let us introduce some notations for brevity. For any $k \in \{0, \dots, n^2+2n-1\}$ let
\[P_n^k(m) := T^kP_n(m),\]
so that for any integer $j$ we have $T^jP_n(m) = P_n^l(m)$, where $l \equiv j\, (mod \; n^2+2n)$. Also we will refer to the two dimensional vector obtained by differentiating the coordinates of $P_n^i(m)$ with respect to $m$ as $\frac{d}{dm}P_n^i(m)$, and $\Bigl(\frac{d}{dm}P_n^i(m),1\Bigr)$ will denote the three dimensional vector that has first two coordinates identical to $\frac{d}{dm}P_n^i(m)$ and $1$ as the third coordinate. Using this notation we can write

\begin{equation}\label{roofsumderiv}
\begin{split}
\frac{d}{dm}\tau_{n^2+2n}(P_n(m),m) & = \sum\limits_{i=0}^{n^2+2n-1} \frac{d}{dm}\tau(P_n^i(m),m) \\ & = \sum\limits_{i=0}^{n^2+2n-1} \left\langle (\nabla_{h,z,m}\tau)(P_n^i(m),m), \Bigl(\frac{d}{dm}P_n^i(m),1\Bigr) \right\rangle,
\end{split}
\end{equation}
where $\langle.,.\rangle$ denotes the usual scalar product in $\mathbb{R}^3$. While we have an explicit formula for $\tau$ and hence also for $\nabla_{h,z,m}\tau$, calculating the two dimensional vector $\frac{d}{dm}P_n^i(m)$ is slightly more difficult. For this we use that
\[T^{n^2+2n}P_n^k(m)=P_n^k(m),\]
and we perform implicit differentiation with respect to $m$ keeping in mind that the dynamics $T$ also depends on $m$. Let us denote by $\frac{\partial T}{\partial m}(P_n^{k+i})$ the two dimensional vector obtained by first differentiating $T$ with respect to $m$ and then evaluating the result at the point $P_n^{k+i}$. As we perturb the parameter we have to take into account that not just every point of the trajectory varies with $m$, but the dynamics also chenges. These two effects jointly appear in the calculations leading to the formula

\begin{multline}\label{periodicderiv}
\frac{d}{dm}P_n^k(m) = \\= \left(I-\prod\limits_{i=0}^{n^2+2n-1}DT(P_n^{k+i}(m))\right)^{-1} \sum\limits_{i=0}^{n^2+2n-1} \prod\limits_{j=i+1}^{n^2+2n-1}DT(P_n^{k+j}(m)) \cdot \frac{\partial T}{\partial m}(P_n^{k+i}(m)) \\= \left(I-DT^{n^2+2n}(P_n^k(m))\right)^{-1} \sum\limits_{i=0}^{n^2+2n-1} DT^{n^2+2n-(i+1)}(P_n^{k+i+1}(m)) \cdot \frac{\partial T}{\partial m}(P_n^{k+i}(m)) \\= \sum\limits_{i=0}^{n^2+2n-1} (DT^{(i+1)-(n^2+2n)}(P_n^k(m)) - DT^{i+1}(P_n^k(m)))^{-1} \cdot \frac{\partial T}{\partial m}(P_n^{k+i}(m)),
\end{multline}
where in the last line we used the inverse differentiation rule and again the fact that the point $P_n^k(m)$ is periodic with period $n^2+2n$. In the next step we show that this sum is bounded.

\begin{lem}\label{boundedderiv}
For each $k \in \{0, \dots, n^2+2n-1\}$ the quantity $\Bigl|\frac{d}{dm}P_n^k(m)\Bigr|$ is uniformly bounded.
\end{lem}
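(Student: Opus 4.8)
The plan is to exploit the formula \eqref{periodicderiv} for $\frac{d}{dm}P_n^k(m)$, which writes the derivative as a sum of $n^2+2n$ terms, the $i$-th of which is $\bigl(DT^{(i+1)-(n^2+2n)}(P_n^k(m)) - DT^{i+1}(P_n^k(m))\bigr)^{-1}\cdot \frac{\partial T}{\partial m}(P_n^{k+i}(m))$. The key point is that the operator norm of the inverse appearing here decays \emph{geometrically} in $\min\{i+1, n^2+2n-(i+1)\}$, i.e. in the distance of the index from both ends of the orbit, because $DT$ expands the unstable cone and contracts the stable cone uniformly (uniform hyperbolicity, proved in \cite{BBNV}). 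Since $\bigl|\frac{\partial T}{\partial m}\bigr|$ is uniformly bounded on the relevant region $(R_0\cup R_1)\cap(T(R_0)\cup T(R_1))$ — this is an explicit, continuous-in-$m$ quantity on a compact parameter interval, exactly as in the bound \eqref{taumax} for $\tau$ — the whole sum is then dominated by a constant times $\sum_{\ell\ge 0}\lambda^{\ell}$ for some $\lambda<1$, hence uniformly bounded independently of $n$, $k$, and $m\in[2/3,3/4]$.

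First I would make the hyperbolicity estimate quantitative in the form needed here. Fix the forward-invariant unstable cone field $C^u$ and the backward-invariant stable cone field $C^s$, uniformly transversal by \cite{BBNV}; let $\Lambda>1$ be a uniform expansion rate, so that $\|DT^N v\|\ge \Lambda^N\|v\|$ for $v\in C^u$ and $\|DT^{-N}v\|\ge \Lambda^N\|v\|$ for $v\in C^s$, valid along the orbit of $P_n^k(m)$ (which stays in the symbolic subsystem on $\{0,1\}$, so all the constants from \cite{BBNV} apply). For a fixed vector $w=\frac{\partial T}{\partial m}(P_n^{k+i})$, decompose $w = w^s + w^u$ along the stable and unstable directions at $P_n^{k+i+1}$; transversality gives $\|w^s\|,\|w^u\|\le c\|w\|$ for a uniform $c$. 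Then I would show that solving $(DT^{(i+1)-(n^2+2n)} - DT^{i+1})u = w$ for $u$ gives $\|u\|\le c'\bigl(\Lambda^{-(i+1)}+\Lambda^{-(n^2+2n-(i+1))}\bigr)\|w\|$: indeed, applying $DT^{i+1}$ to the unstable component forces it to be small by a factor $\Lambda^{-(i+1)}$ (since its image must be comparable to $w^u$), and applying $DT^{-(n^2+2n-(i+1))}$ to the stable component similarly forces it to be small by $\Lambda^{-(n^2+2n-(i+1))}$. Summing over $i$ and using $\sum_{i=0}^{n^2+2n-1}\bigl(\Lambda^{-(i+1)}+\Lambda^{-(n^2+2n-(i+1))}\bigr)\le \frac{2}{1-\Lambda^{-1}}$ yields the claimed uniform bound.

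Two items need care, and the second is where I expect the real work. The minor one is the bound on $\bigl|\frac{\partial T}{\partial m}(P_n^{k+i}(m))\bigr|$: one differentiates the explicit formula \eqref{TonRn}–\eqref{Fandalpha} in $m$ (with $n\in\{0,1\}$ along the whole orbit), and checks, exactly as for $\tau$ above via the involution identity \eqref{involformula} and the corner-point formulas, that all denominators ($\sqrt{F}$, $m$) stay bounded away from zero uniformly on $[2/3,3/4]$ — routine but must be stated. The genuine obstacle is justifying the cone-splitting estimate cleanly, because $DT$ contains a $180^\circ$ rotation (as noted after \eqref{jacobian}), so "$DT$ contracts $C^s$" must be understood in the sense of $DT^{-1}$ expanding it, and one must keep track of the fact that the relevant stable/unstable directions are attached to \emph{different} points of the orbit at each step. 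I would handle this by phrasing everything in terms of the genuine stable and unstable directions $E^s, E^u$ along the periodic orbit (which exist and are uniformly transversal, being sub-cones of $C^s, C^u$), noting that $DT$ maps $E^u(P_n^{k+i})$ to $E^u(P_n^{k+i+1})$ with norm expanded by at least $\Lambda$ and $E^s$ to $E^s$ with norm contracted by at least $\Lambda^{-1}$; the matrix $DT^{(i+1)-(n^2+2n)}-DT^{i+1} = DT^{i+1-(n^2+2n)}(I - DT^{n^2+2n})$ is then block-diagonal in this splitting up to uniformly bounded change-of-basis, and invertible with the stated norm bound because $I-DT^{n^2+2n}$ is bounded below on $E^s$ (where $DT^{n^2+2n}$ is tiny) and, after factoring, on $E^u$ as well.
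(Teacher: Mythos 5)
Your proposal is correct and follows essentially the same route as the paper: starting from \eqref{periodicderiv}, bounding $\bigl\|\frac{\partial T}{\partial m}\bigr\|$ explicitly on $(R_0\cup R_1)\cap(T(R_0)\cup T(R_1))$, and then decomposing each term along the stable/unstable directions of the periodic orbit so that uniform hyperbolicity yields a bound of the form $C\bigl(\Lambda^{-(i+1)}+\Lambda^{-(n^2+2n-(i+1))}\bigr)$, summable as a geometric series. The paper carries this out by an explicit eigenvector-coefficient computation (its relation \eqref{coeffrelation}, with the negative eigenvalues $\lambda<-1<\mu<0$ and the determinant-one cancellation playing the role of your lower bound on $I-DT^{n^2+2n}$), which is just a different bookkeeping of the same estimate.
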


\begin{proof}
To make the notations simpler in this proof we will suppress the dependence of the objects on the parameter $m$. Let us denote the $i$-th term in the sum \eqref{periodicderiv} by $v_i(n,k)$, so

\begin{equation}\label{termrearranged}
\frac{\partial T}{\partial m}(P_n^{k+i}) = (DT^{(i+1)-(n^2+2n)}(P_n^k) - DT^{i+1}(P_n^k)) \cdot v_i(n,k).
\end{equation}
Observe that because $P_n^k$ is a periodic point the tangent spaces of its stable and unstable manifolds can be calculated as the stable and unstable eigendirections of the tangent map $DT^{n^2+2n}(P_n^k)$, respectively. We denote the \emph{normalized} eigenvectors of this matrix by $s(n,k)$ and $u(n,k)$ where the letters $u$ an $s$  refer to stable and unstable, respectively. Then by the invariance of these directions we have
\[DT(P_n^k) s(n,k) = \mu(n,k) s(n,k+1), \quad DT(P_n^k) u(n,k) = \lambda(n,k) u(n,k+1),\]
defining the quantities $\lambda(n,k) < -1 < \mu(n,k) < 0$, (both are negative as the tangent map contains a rotation by $180$ degrees). Here and throughout the subsection the index $k$ should be understood modulo $n^2+2n$. Note that $\prod\limits_{k=0}^{n^2+2n-1} \mu(n,k)$ and $\prod\limits_{k=0}^{n^2+2n-1} \lambda(n,k)$ are the stable and unstable eigenvalues of the matrix $DT^{n^2+2n}(P_n^k)$ and hence their product is $1$, since $DT^{n^2+2n}(P_n^k)$ has determinant $1$. We consider the decompositions
\[v_i(n,k) = a_i s(n,k) + b_i u(n,k), \quad \frac{\partial T}{\partial m}(P_n^{k+i}) = c_i s(n,k+i+1) + d_i u(n,k+i+1),\]
and substitute them into \eqref{termrearranged} to get

\begin{multline*}
c_i s(n,k+i+1) + d_i u(n,k+i+1) = \\ = a_i \prod\limits_{j=1}^{n^2+2n-(i+1)} \frac{1}{\mu(n,k-j)} s(n,k+i+1) \ +\  b_i \prod\limits_{j=1}^{n^2+2n-(i+1)} \frac{1}{\lambda(n,k-j)} u(n,k+i+1) \ - \\ - a_i \prod\limits_{j=0}^{i} \mu(n,k+j) s(n,k+i+1) - b_i \prod\limits_{j=0}^{i} \lambda(n,k+j) u(n,k+i+1) = \\ = a_i \prod\limits_{j=0}^i \mu(n,k+j) \biggl(\prod\limits_{k=1}^{n^2+2n} \lambda(n,k) -1\biggr) s(n,k+i+1) \ + \\ + b_i \prod\limits_{j=0}^i \lambda(n,k+j) \biggl(\prod\limits_{k=1}^{n^2+2n} \mu(n,k) -1\biggr) u(n,k+i+1).
\end{multline*}

This gives the relations between the coefficients

\begin{equation}\label{coeffrelation}
\begin{split}
a_i = \frac{c_i}{\prod\limits_{j=0}^{i} \mu(n,k+j)} \frac{1}{\prod\limits_{k=1}^{n^2+2n} \lambda(n,k) -1} \\
b_i = \frac{d_i}{\prod\limits_{j=0}^{i} \lambda(n,k+j)} \frac{1}{\prod\limits_{k=1}^{n^2+2n} \mu(n,k) -1}.
\end{split}
\end{equation}
First we show that both $c_i$ and $d_i$ are uniformly bounded. To see this, by the uniform transversality of stable and unstable cones it is enough to check that $\left\| \frac{\partial T}{\partial m}(P_n^{k+i}) \right\|$ is itself bounded. We have the general formula for the derivative of \eqref{dynamics}
\[ \frac{\partial T}{\partial m}(h,z,m) = \Bigl( 1+z^2(1-6m+6m^2), \frac{-(2n+2)}{\sqrt{2 F}}(\frac{h}{m^2}+z^2(4m-3))\Bigr), \]
and as we have already shown right after \eqref{partialderivatives}, on the domain $(R_0 \cup R_1) \cap (T(R_0) \cup T(R_1))$ and for the parameter interval $m \in [2/3,3/4]$ the quantity $F$ is bounded away from $0$, while $h$ and $z^2$ is bounded, hence the whole norm $\left\| \frac{\partial T}{\partial m}(P_n^{k+i}) \right\|$ is bounded, too.
\\Finally, since the dynamics is uniformly hyperbolic for all fixed values of $m$ (proved in \cite{BBNV}), there are numbers $\mu$ and $\lambda$ such that $\lambda(n,k) \leq \lambda < -1 < \mu \leq \mu(n,k) < 0$ for all $k$. Hence both sets of numbers $\{a_{n^2+2n-1-i}\}_{i=0}^{n^2+2n-1}$ (i.e. the $a_i$'s in reversed order) and $\{b_i\}_{i=0}^{n^2+2n-1}$ can be estimated by the initial segment of a geometric series. Indeed this is immediate for the $b_i$'s, and for the $a_i$'s note that when the index $i$ is large and hence the denominator $\prod\limits_{j=0}^{i} \mu(n,k+j)$ is very small, it is still compensated by $\prod\limits_{k=1}^{n^2+2n} \lambda(n,k)$. Therefore these sequences are summable and so elementary inequalities imply that the statement of the lemma is true.
\end{proof}

Now we are in the position to prove Proposition \ref{C1convsimplified}.

\begin{proof}[Proof of Proposition \ref{C1convsimplified}]
Consider the formula \eqref{roofsumderiv} we gave for the derivative on the left hand side of Proposition \ref{C1convsimplified}. By Lemma \ref{boundedderiv} we know that each vector $(\frac{d}{dm}P_n^i(m),1)$ is bounded and we have already shown after \eqref{partialderivatives} that the gradient of $\tau$ is also bounded, however this was the gradient only in the spatial variables. In \eqref{roofsumderiv} the gradient contains the derivative with respect to $m$, too. By \eqref{dynamics}
\[\frac{\partial \tau}{\partial m} = \frac{2n+1}{\sqrt{2 F}} \Bigl(\frac{h}{m^2} + (4m-3)z^2\Bigr) - \sqrt{\frac{h}{2m^3}} - 2z,\]
and basically the same argument given after \eqref{partialderivatives} shows that this quantity and hence the whole gradient in each of the terms of \eqref{roofsumderiv} is bounded. Therefore each scalar product in \eqref{roofsumderiv} is bounded as well. In \eqref{roofsumderiv} one can replace $P_n(m)$ by $F_1(m)$ to obtain a similar formula for $\frac{d \tau(F_1(m),m)}{dm}$. After this

\begin{multline}\label{middlepart}
\frac{1}{n^2+2n} \frac{d \tau_{n^2+2n}(P_n(m),m)}{dm} - \frac{d \tau(F_1(m),m)}{dm} = \\
= \frac{1}{n^2+2n} \sum\limits_{i=0}^{n^2+2n-1} \left\langle (\nabla_{h,z,m}\tau)(P_n^i(m),m), \biggl(\frac{d}{dm}P_n^i(m),1\biggr) \right\rangle - \frac{d \tau(F_1(m),m)}{dm} = \\
= \frac{1}{n^2+2n} \sum\limits_{i=3n}^{n^2-n} \left\langle (\nabla_{h,z,m}\tau)(P_n^i(m),m) - (\nabla_{h,z,m}\tau)(F_1(m),m), \biggl(\frac{d}{dm}P_n^i(m),1\biggr) \right\rangle + \\
+ \frac{1}{n^2+2n} \sum\limits_{i=3n}^{n^2-n} \left\langle (\nabla_{h,z,m}\tau)(F_1(m),m), \biggl(\frac{d}{dm}P_n^i(m) - \frac{d}{dm}F_1(m),0\biggr) \right\rangle + \\ + \mathcal{O}\Bigl(\frac{1}{n}\Bigr).
\end{multline}

Note that we compressed an order of $n$ number of summands into the error term and so we are left to deal only with the middle part of the original sum. The first sum is easy to handle. Basically it is enough to check that $\nabla_{h,z,m}\tau$ is $C^1$ in the variables $(h,z)$, which turns out to be the case after doing similar computations as before. After this an argument similar to the one used in the proof of Proposition \ref{C0convergence} shows that the first sum in \eqref{middlepart} tends to $0$ as $n \to \infty$. Actually it is exponentially small in $n$, because for $3n \leq i \leq n^2-n$ the sum of the distances between the points $P_n^i(m)$ and $F_1(m)$ can be estimated by a segment of a geometric series with the largest term being exponentially small, since $s(P_n^i(m),F_1(m)) \geq 2n$ for such indices.
\\What remains is to show that the quantity

\begin{equation}\label{lastsum}
\frac{1}{n^2+2n} \sum\limits_{i=3n}^{n^2-n} \left\langle (\nabla_{h,z,m}\tau)(F_1(m),m), \biggl(\frac{d}{dm}P_n^i(m) - \frac{d}{dm}F_1(m),0\biggr) \right\rangle
\end{equation}
tends to $0$ as $n\to \infty$. We know that the gradient $(\nabla_{h,z,m}\tau)(F_1(m),m)$ is bounded so we estimate the norm of the other vector, which is a difference of two derivatives. We use the explicit formula \eqref{periodicderiv} for these differentials to get

\begin{multline}\label{derivdiffer}
\frac{d}{dm}P_n^i(m) - \frac{d}{dm}F_1(m) =\\= \sum\limits_{j=0}^{n^2+2n-1}(DT^{(j+1)-(n^2+2n)}(P_n^i(m)) - DT^{j+1}(P_n^i(m)))^{-1} \cdot \frac{\partial T}{\partial m}(P_n^{j+i}(m)) -\\- (DT^{(j+1)-(n^2+2n)}(F_1(m)) - DT^{j+1}(F_1(m)))^{-1} \cdot \frac{\partial T}{\partial m}(F_1(m)).
\end{multline}
To estimate the middle part of this sum we use ideas from the proof of Lemma \ref{boundedderiv} (especially formula \eqref{coeffrelation}), which remains valid and actually is simpler, when $P_n^i(m)$ is replaced by $F_1(m)$. From there it follows that for $n \leq j \leq n^2+n-1$ the terms that form the $j$-th difference in \eqref{derivdiffer} are already exponentially small in $n$ so we can omit them. Therefore it is enough to estimate the differences in \eqref{derivdiffer} for $3n \leq i \leq n^2-n$, $0 \leq j \leq n-1$ and $n^2+n \leq j \leq n^2+2n-1$. Using the triangular inequality we estimate the norm of the $j$-th term by

\begin{multline}\label{usualtrick}
\biggl\|(DT^{(j+1)-(n^2+2n)}(F_1(m)) - DT^{j+1}(F_1(m)))^{-1} \cdot \biggl(\frac{\partial T}{\partial m}(P_n^{j+i}(m)) - \frac{\partial T}{\partial m}(F_1(m))\biggr)\biggr\| + \\ + \biggl\|\biggl[(DT^{(j+1)-(n^2+2n)}(P_n^i(m)) - DT^{j+1}(P_n^i(m)))^{-1} -\\- (DT^{(j+1)-(n^2+2n)}(F_1(m)) - DT^{j+1}(F_1(m)))^{-1}\biggr] \cdot \frac{\partial T}{\partial m}(P_n^{j+i}(m))\biggr\|.
\end{multline}
Let us denote the vector in the first term by $w_{j,i}$. Note that $\frac{\partial T}{\partial m}$ is $C^1$ on the domain we work on, so after setting
\[v_{j,i} := \frac{\partial T}{\partial m}(P_n^{j+i}(m)) - \frac{\partial T}{\partial m}(F_1(m))\]
we have

\begin{equation}\label{vjinorm}
\left\|v_{j,i}\right\|  \leq C \cdot d(P_n^{j+i}(m),F_1(m)) \leq \bar{C} \theta^{s(P_n^{j+i}(m),F_1(m))}.
\end{equation}
We denote the coordinates of $v_{j,i}$ and $w_{j,i}$ in the basis $\{s(F_1(m)),u(F_1(m))\}$ by $(a_{j,i},b_{j,i})$ and $(c_{j,i},d_{j,i})$, respectively. Then a calculation analogous to the one in the proof of Lemma \ref{boundedderiv} (c.f. \eqref{coeffrelation}) gives

\begin{equation}\label{coeffrelationagain}
\begin{split}
c_{j,i} = \frac{a_{j,i} \cdot \lambda(F_1(m))^{j+1}}{\lambda(F_1(m))^{n^2+2n} -1} \\
d_{j,i} = \frac{b_{j,i} \cdot \mu(F_1(m))^{j+1}}{\mu(F_1(m))^{n^2+2n} -1}.
\end{split}
\end{equation}
Due to the uniform transversality of stable and unstable cones, estimates on the norm of $w_{j,i}$ are -- up to a constant -- the same as estimates on the coordinates $c_{j,i}, d_{j,i}$. By the same reason \eqref{vjinorm} implies that
\[|a_{j,i}| \leq C \cdot \theta^{s(P_n^{j+i}(m),F_1(m))}, \quad |b_{j,i}| \leq C \cdot \theta^{s(P_n^{j+i}(m),F_1(m))},\]
in particular they are bounded and the estimates form segments of geometric series in $j$. Hence the sum of $c_{j,i}$'s for $0 \leq j \leq n-1$ and the sum of $d_{j,i}$'s for $n^2+n \leq j \leq n^2+2n-1$ are both exponentially small in $n$. To see that this holds also for the sum of $c_{j,i}$'s for $n^2+n \leq j \leq n^2+2n-1$ and the sum of $d_{j,i}$'s for $0 \leq j \leq n-1$ use in addition that $3n \leq i \leq n^2-n$ and hence $s(P_n^{j+i}(m),F_1(m)) \geq n$. These observations altogether leads to the fact that for all $i$ fixed between $3n$ and $n^2-n$
\[\sum\limits_{j=0}^{n-1}\|w_{j,i}\| + \sum\limits_{j=n^2+n}^{n^2+2n-1}\|w_{j,i}\| \leq C \cdot \theta^n.\]
Therefore it remains only to estimate the second term in \eqref{usualtrick}. For this we redefine the coefficients $a_{j,i}, \dots, d_{j,i}$ as

\begin{equation}\label{redefine}
\begin{split}
\frac{\partial T}{\partial m}(P_n^{j+i}(m)) = a_{j,i} s(P_n^i(m)) + b_{j,i} u(P_n^i(m)) \\
\frac{\partial T}{\partial m}(P_n^{j+i}(m)) = c_{j,i} s(F_1(m)) + d_{j,i} u(F_1(m)).
\end{split}
\end{equation}
In this notation, after a calculation analogous to the repeatedly referred one from Lemma \ref{boundedderiv}, the second term of \eqref{usualtrick} reads as

\begin{multline}\label{dynholder}
\biggl\|\frac{a_{-1,i+j+1}}{\prod\limits_{k=0}^j\mu(P_n^{i+k}(m))} \frac{1}{\prod\limits_{k=1}^{n^2+2n}\lambda(P_n^k(m)) -1} s(P_n^i(m)) + \\ + \frac{b_{-1,i+j+1}}{\prod\limits_{k=0}^j\lambda(P_n^{i+k}(m))} \frac{1}{\prod\limits_{k=1}^{n^2+2n}\mu(P_n^k(m)) -1} u(P_n^i(m)) - \\ - \frac{c_{j,i} \cdot \lambda(F_1(m))^{j+1}}{\lambda(F_1(m))^{n^2+2n}-1} s(F_1(m)) - \frac{d_{j,i} \cdot \mu(F_1(m))^{j+1}}{\mu(F_1(m))^{n^2+2n}-1} u(F_1(m))\biggr\|.
\end{multline}
To finish the proof we use the regularity of the stable and unstable directions stated in the following lemma.

\begin{lem}\label{stableunstabledhc}
Let $x$ and $y$ be two points with both stable and unstable directions well-defined. Further assume that for all $i$ for which $-s_-(x,y) < i < s_+(x,y)$, the points $T^i(x)$ and $T^i(y)$ are either in $R_0$ or in $R_1$. Then there exist constants $C_s,C_u >0$ and $\gamma_s,\gamma_u \in (0,1)$ such that
\[\|s(x)-s(y)\| \leq C_s \gamma_s^{s(x,y)} \quad \|u(x)-u(y)\| \leq C_u \gamma_u^{s(x,y)}.\]
\end{lem}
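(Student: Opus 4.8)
The plan is to establish the Hölder continuity of the stable and unstable directions by exploiting the standard backward/forward invariance argument for hyperbolic systems with singularities, adapted to the restricted symbolic setting assumed in the lemma. First I would recall that the stable direction $s(x)$ at a point is obtained as the limit of the images $DT^n(C^s_{T^{-n}x})$ of the stable cone under the dynamics, and similarly $u(x)$ as the limit of the backward iterates of the unstable cone; both limits exist and converge exponentially fast (at rate governed by the ratio of the contraction and expansion constants $\mu/\lambda$) because the cone field is strictly invariant. The key point enabled by the hypothesis is that, for $-s_-(x,y) < i < s_+(x,y)$, the orbits of $x$ and $y$ visit the \emph{same} partition element ($R_0$ or $R_1$), so along this stretch the two orbits experience the same smooth branch of $T$; since $T$ is $C^2$ on each $R_n$, the tangent maps $DT$ at $T^i(x)$ and $T^i(y)$ differ by at most $\mathrm{const}\cdot d(T^i x, T^i y)$, and by \eqref{metricsequiv} together with uniform hyperbolicity this is $\le \mathrm{const}\cdot\theta^{\min(|i|, s(x,y)-|i|)}$.

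The core estimate I would carry out is a telescoping comparison. Write $s(x)$ as $\lim_{N\to\infty} DT^N(\hat s_N)$ where $\hat s_N$ is a reference stable vector at $T^{-N}x$, and analogously for $y$; truncate both at $N = s_-(x,y)$, incurring an exponentially small error $O((\mu/\lambda)^{s_-(x,y)})$. For the truncated quantity, compare the two products $\prod_{i=0}^{N-1} DT(T^{-i}x)$ and $\prod_{i=0}^{N-1} DT(T^{-i}y)$ acting (projectively) on the cone: insert a telescoping sum so that the difference becomes a sum over $i$ of terms in which all factors agree except one, the mismatched factor contributing $\|DT(T^{-i}x) - DT(T^{-i}y)\| \le \mathrm{const}\cdot\theta^{s_-(x,y)-i}$, and the surrounding products contract the relevant cone direction at rate $(\mu/\lambda)^{\text{(number of remaining steps)}}$. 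Summing the resulting geometric-type series in $i$ yields a bound of the form $C_s \gamma_s^{s_-(x,y)}$ with $\gamma_s = \max(\theta, \mu/\lambda)^{c}$ for a suitable $c>0$; taking the minimum with the forward separation handles the general $s(x,y)$, and the symmetric argument with $T^{-1}$ in place of $T$ (using that $I$ conjugates the two, or directly) gives the unstable bound $C_u\gamma_u^{s(x,y)}$.

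The main obstacle I anticipate is bookkeeping the projective action carefully: $s(x)$ is a direction, not a vector, so the telescoping must be done either on the one-dimensional subspace (e.g.\ via the slope coordinate, using that stable curves are increasing, unstable decreasing, so slopes lie in a compact set bounded away from the bad directions) or through a normalization that does not itself introduce discontinuities. One must check that the denominators appearing upon normalization — essentially the norms $\|DT^N \hat s_N\|$, controlled below by $\prod|\mu|$-type products — stay uniformly bounded away from zero along the compared orbits, which is exactly where uniform hyperbolicity and the uniform transversality of the cones (from \cite{BBNV}) enter. A minor additional point is that the bound on $\|DT(T^i x) - DT(T^i y)\|$ requires the $C^1$-norm of $T$ restricted to $R_0$ and $R_1$ to be uniformly bounded, together with uniform bounds on the distortion; both follow from the explicit Jacobian formula \eqref{jacobian} and the fact that on $R_0 \cup R_1$ the relevant quantities ($F$, $h$, $z$) are bounded and $F$ is bounded away from zero, as already used after \eqref{partialderivatives}. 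Once these uniformities are in hand the geometric summation is routine and the constants $C_s, C_u, \gamma_s, \gamma_u$ can be read off.
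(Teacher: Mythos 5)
Your proposal is correct and follows essentially the same route as the paper: both arguments compare the two orbits along their common symbolic past/future, bound the difference of the tangent maps at corresponding points by the piecewise $C^2$ smoothness of $T$ on the shared partition element together with \eqref{metricsequiv}, propagate this through the projective contraction of the cone field, and sum the resulting two-sided geometric series (the paper packages your telescoping as the one-step recursion \eqref{twotermunstable}--\eqref{iteratedunstable}, makes the projective contraction explicit via an angle computation from the Jacobian \eqref{jacobian}, and handles your normalization worry by working with unit vectors and choosing the truncation depth $n=s_-(x,y)/2$). One small labeling slip: $\lim_{N}DT^{N}(\hat s_N)$ with $\hat s_N$ based at $T^{-N}x$ constructs the \emph{unstable} direction rather than $s(x)$, but since you invoke the symmetric argument with $T^{-1}$ for the other direction anyway, this does not affect the substance.
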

This Lemma expresses the dynamical H\"older continuity of the stable/unstable directions, but we were unable to find a good reference, so instead we give a proof of the Lemma in the Appendix. As a consequence, using also that $\|DT\|$ is bounded on our domain, this property holds for the functions $\mu(.)$ and $\lambda(.)$ as well. Finally note that all coefficients $a_{j,i}, \dots, d_{j,i}$ can be expressed using scalar products of the vectors $u(.)$, $s(.)$, their orthocomplements and $\frac{\partial T}{\partial m}$. We have already shown that the latter is uniformly bounded and this, together with the uniform transversality of stable and unstable cones, leads to the fact that both differences $a_{-1,i+j+1} - c_{j,i}$ and $b_{-1,i+j+1} - d_{j,i}$ are exponentially small. It is then a straightforward calculation, using several triangular inequalities, that \eqref{dynholder} is exponentially small and hence so is the second term in \eqref{usualtrick}. This completes the proof of Propositions~\ref{C1convsimplified} and thus~\ref{C1convergence}.
\end{proof}

\section{Extension of results \label{s:extension}}

As the reader might have already noticed the calculations in Section \ref{calculations} do not depend on the system in hand crucially. The system specific parts of the argument were
\begin{enumerate}
	\item to find a full shift subsystem based on the geometrical properties,
	\item to give bounds on certain quantites like $\tau$ and its derivatives, or the expansion and contraction rates of $T$,
	\item to show that the ratio of the continuous periods of the original fixed points for $T$ is a $C^1$ function of the parameter $m$ with nonzero derivative.
\end{enumerate}
Appart from these the whole method works in full generality. Taking advantage of this observation, in this section we extend our results for a larger set of parameters.
\\As for $2.$, without going into details or doing the actual calculations, we claim that whenever we work on a domain of the form $\cup_{i,j \in \{k-1,k\}} (R_i \cap T(R_j))$, all the mentioned quantities will be uniformly bounded for the corresponding interval of the parameter. This can be checked in an analogous way as we did in Section \ref{calculations} for $k=1$.
\\In Lemma \ref{subsystem} we discussed the geometry of the sets $R_i \cap T(R_j)$ for $i,j \in \{0,1\}$ and showed that their preimages intersect the unstable sides of the sets $T(R_0)$ and $T(R_1)$ if $m \in [2/3,3/4]$. From this we could conclude the existence of a subsystem that is a full shift on the two symbols $0$ and $1$. Now we can essentially repeat our argument for the stripes $R_{k-1}, R_k$ with larger $k$ giving a similar full shift subsystem for different parameter intervals. This is even simpler than in Lemma \ref{subsystem}, because for $k \geq 1$ all stripes $R_k$ are topologically squares, in contrast to $R_0$ that is topologically a triangle, which required special care.

\begin{lem}\label{subsystemextended}
Given any $k \geq 1$, for every $m \in \bigl[\frac{1+k}{2+k}, \frac{2k^2+2k-1}{2k^2+2k}\bigr]$ the projection of any sequence $\underline{x} \in \{k-1,k\}^{\mathbb{Z}}$ is realised as a physical configuration, i.e. as a point in $\mathcal{M}_1$. Any such point has local stable and unstable manifolds that fully cross the phase space.
\end{lem}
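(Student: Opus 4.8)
The plan is to mirror the proof of Lemma~\ref{subsystem}, but in the simpler setting where all four regions $R_i \cap T(R_j)$ with $i,j \in \{k-1,k\}$ are quadrangular (curvilinear rectangles). First I would verify that for the stated parameter interval $m \in \bigl[\frac{1+k}{2+k}, \frac{2k^2+2k-1}{2k^2+2k}\bigr]$ all four sets $R_i \cap T(R_j)$, $i,j\in\{k-1,k\}$, are indeed topologically quadrangular, bounded by two stable and two unstable sides. This is the analogue of case~\ref{quadrangular} in Figure~\ref{RicapRj}, and since $R_{k-1}$ and $R_k$ are both topologically squares for $k\geq 1$ (unlike the triangular $R_0$), the geometry is uniform and no special corner case arises; the lower endpoint $\frac{1+k}{2+k}$ is exactly the threshold at which the relevant intersections become quadrangular.

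Next I would run the same ``stable sides cross both unstable sides'' argument. For the two rectangles $R_k \cap T^{-1}(R_{k-1})$ and $R_k \cap T^{-1}(R_k)$ this is automatic, exactly as in Lemma~\ref{subsystem}: their stable sides are preimages of segments of the singularity curves $r_{k-1}$, $r_k$ (and a boundary segment of $\partial\mathcal{M}_1$) which connect the two unstable sides of $T(R_k)$, and the inverse dynamics carries these unstable sides onto the unstable sides of $R_k$, which are bits of $\partial\mathcal{M}_1$; hence the segments map onto stable curves crossing $\mathcal{M}_1$ from side to side, in particular crossing the unstable sides of both $T(R_{k-1})$ and $T(R_k)$. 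For the two rectangles $R_{k-1} \cap T^{-1}(R_{k-1})$ and $R_{k-1}\cap T^{-1}(R_k)$ one has to check, using Formulas~\eqref{pointsXBx}, \eqref{pointsIxIbx} and \eqref{TonRn}, that the relevant corner of $R_{k-1}\cap T^{-1}(R_k)$ — namely $T^{-1}(X_k(m))$ — lies outside $T(R_k)$, which is where the upper endpoint $\frac{2k^2+2k-1}{2k^2+2k}$ comes from; this is the exact analogue of the computation showing $T^{-1}(X_1(m))$ falls outside $T(R_1)$ for $m < 3/4$. With the Claim established, iterating the construction digit by digit (the preimage of any subrectangle of $T(R_{k-1})\cup T(R_k)$ fully crossing in the stable direction is again such a subrectangle inside $R_{k-1}\cup R_k$) shows that for any one-sided sequence $x_0,x_1,\dots \in \{k-1,k\}$ the set $\{\underline{y} : y_i = x_i \ \forall i \in \mathbb{N}\}$ projects to a long local stable manifold crossing the phase space.

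Finally, I would invoke the involution $I : \mathcal{M}_1 \to \mathcal{M}_1$ of \eqref{involformula}, which maps $\cup_{i,j\in\{k-1,k\}} R_i \cap T(R_j)$ to itself and stable manifolds to unstable ones, acting on symbols by $I(\underline{x})_i = x_{-i-1}$. This produces the long local unstable manifold for the past coordinates and hence realizes every two-sided sequence $\underline{x} \in \{k-1,k\}^{\mathbb{Z}}$ as a point of $\mathcal{M}_1$ with stable and unstable manifolds crossing $\mathcal{M}_1$, exactly as in the last paragraph of the proof of Lemma~\ref{subsystem}. The main obstacle I anticipate is purely computational: pinning down the precise parameter endpoints by evaluating the corner points $X_k(m)$, $Ix_k(m)$, $Bx_k(m)$, $Ibx_k(m)$ and their images/preimages under $T$ and $I$, and confirming that the ordering of corners that makes all four regions quadrangular and the stable sides cross as needed holds exactly on $\bigl[\frac{1+k}{2+k}, \frac{2k^2+2k-1}{2k^2+2k}\bigr]$; conceptually, however, the argument is a routine adaptation of Lemma~\ref{subsystem}, made easier by the absence of the triangular region $R_0$.
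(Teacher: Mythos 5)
Your overall architecture --- quadrangularity of the four sets $R_i \cap T(R_j)$, the crossing claim, the digit-by-digit iteration, and the involution --- matches the paper's, but you misidentify where the upper endpoint of the parameter interval comes from, and this is a concrete gap if the plan is carried out as written. The whole point of the paper's proof for $k\ge 2$ is that the special corner computation of Lemma~\ref{subsystem} is \emph{not} needed here: since $R_{k-1}$ and $R_k$ are both topological squares (unlike the triangular $R_0$), the ``stable sides cross both unstable sides'' claim is automatic for \emph{all four} subrectangles $R_i\cap T^{-1}(R_j)$ once the four sets $R_i\cap T(R_j)$ are quadrangular --- by exactly the argument you correctly give for $R_k\cap T^{-1}(R_{k-1})$ and $R_k\cap T^{-1}(R_k)$, which applies verbatim to $R_{k-1}\cap T^{-1}(\cdot)$ because the unstable sides of $R_{k-1}$ are again pieces of $\partial\mathcal{M}_1$. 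There is no analogue of comparing $T^{-1}(X_k(m))$ with a corner of $T(R_k)$, and it is not such a comparison that produces the bound $m\le\frac{2k^2+2k-1}{2k^2+2k}$.

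Both endpoints of the interval arise from the quadrangularity step, which you leave essentially unexamined beyond asserting that the lower endpoint is ``exactly the threshold.'' Concretely, using that the singularities $r_n$ are increasing and the inverse singularities $I(r_n)$ are decreasing, quadrangularity of all four regions reduces to two endpoint comparisons: $r_k\cap I(r_k)\neq\emptyset$, guaranteed by $Bxh_k(m)\le Ibxh_k(m)$, which via \eqref{pointsXBx} and \eqref{pointsIxIbx} is equivalent to $m\ge\frac{1+k}{2+k}$; and $r_{k-2}\cap I(r_k)\neq\emptyset$, guaranteed by $Bxz_{k-2}(m)\le Ixz_k(m)$, which is equivalent to $m\le\frac{2k^2+2k-1}{2k^2+2k}$. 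The second condition is what controls quadrangularity of $R_{k-1}\cap T(R_k)$ (equivalently, by the involution, of $R_k\cap T(R_{k-1})$), and it is precisely this that fails above the upper endpoint. Your proposal ties the upper endpoint to an $R_0$-style corner check that is irrelevant for $k\ge2$, while your quadrangularity verification as described accounts only for the lower endpoint; as a result the upper bound on $m$ is not actually established.
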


\begin{proof}
The $k=1$ case was proved in Lemma \ref{subsystem}. To apply the same argument to the case $k \geq 2$, one has to check only that the sets $R_i \cap T(R_j)$ for $i,j \in \{k-1,k\}$ are all quadrangular. The required intersections and hence the whole iteration scheme is then automatic. Also, checking that all the previous four sets are quadrangular is easy. The geometry of the system, in particular that the singularities are all increasing, while the inverse singularities are all decreasing curves, implies that it is enough to show that $I(r_k)$ intersects both $r_k$ and $r_{k-2}$. A nonempty intersection $r_k \cap I(r_k)$ can be guaranteed, using the mentioned monotonicity of the curves, by showing that the left endpoint of $r_k$ has smaller $h$ coordinate than the right endpoint of $I(r_k)$. Based on \eqref{pointsXBx} and \eqref{pointsIxIbx} this condition is equivalent to

\begin{gather*}
Bxh_k(m) \leq Ibxh_k(m) \Leftrightarrow \\ \Leftrightarrow \frac{m(-2m(k+2)+2k+3)^2}{2(1-m)k(k+2)+2} \leq \frac{m}{2(1-m)k(k+2)+2} \Leftrightarrow \\ \Leftrightarrow \frac{1+k}{2+k} \leq m.
\end{gather*}
For the intersection of $r_{k-2}$ and $I(r_k)$ it suffices to check that the left endpoint of $r_{k-2}$ has smaller $z$ coordinate than the left endpoint of $I(r_k)$. Using again \eqref{pointsXBx} and \eqref{pointsIxIbx} this leads to the formula

\begin{gather*}
Bxz(m,k-2) \leq Ixz(m,k) \Leftrightarrow \\ \Leftrightarrow -\frac{k}{\sqrt{1-(m-1)k(k-2)}} \leq -\frac{k+1}{\sqrt{(k+2)^2-m(k+1)(k+3)}} \Leftrightarrow \\ \Leftrightarrow \frac{2k^2+2k-1}{2k^2+2k} \geq m.
\end{gather*}
This completes the proof of the Lemma.
\end{proof}

As a final step towards the extension of results we need to present fixed points depending on $m$, such that their continuous period ratio is a $C^1$ function of $m$ with nonzero derivative. Luckily the fixed points identified in \eqref{fixedpoints} will do the job. Indeed, consider the $k$'th parameter interval from Lemma \ref{subsystemextended}, i.e. let $m$ be in $\bigl[\frac{1+k}{2+k}, \frac{2k^2+2k-1}{2k^2+2k}\bigr]$. Then the fixed points $F_{k-1}(m)$ and $F_k(m)$ are realised as physical configurations, they correspond to the projections $\pi(\underline{k-1})$ and $\pi(\underline{k})$. The ratio of their periods is

\begin{equation}\label{ratioextended}
\frac{\tau(F_k(m),m)}{\tau(F_{k-1}(m),m)} = \frac{(k+1)\sqrt{1-k^2 \alpha}}{k\sqrt{1-(k+1)^2 \alpha}},
\end{equation}
where $\alpha$ is as in \eqref{Fandalpha}.
Recall that in Theorem \ref{aediophantine} we stated rapid mixing for almost every value of the parameter within the interval $I \subset (1/2,1)$ from Theorem \ref{youngtower}, the existence of which was shown in our earlier paper \cite{BBNV}. We do not have a quantitative description of $I$, we only know that it is an open interval containing $0.74$ (though computer assisted techniques could give bounds on the endpoints of $I$). In any case, the proof of \cite{BBNV} that the discrete time system can be modelled by a Young tower with exponential tails works only for parameter values within $I$. On the other hand, as stated in Theorem \ref{complexity}, if we knew that the singularity set has subexponential complexity, then we would have a Young tower representation. Hence to extend our results we add the extra assumption of subexponential complexity (a usual assumption in the literature ensuring that expansion prevails cutting), which in particular implies the growth lemma and therefore the required tower representation.

\begin{proof}[Proof of Theorem \ref{assumingcomplexity}]
In view of our previous analysis the proof of Theorem \ref{aediophantine} can be repeated for any of the parameter intervals $\bigl[\frac{1+k}{2+k}, \frac{2k^2+2k-1}{2k^2+2k}\bigr]$ once we show that \eqref{ratioextended} is a $C^1$ function with nonzero derivative. Since $\alpha$ is a polynomial in $m$ and it is negative for $m \in (1/2,1)$ it follows that the denominator in \eqref{ratioextended} can not be zero and hence the ratio is indeed $C^1$ in $m$. A straightforward calculation shows that it is strictly decreasing for $m \in [1/2,3/4]$ and strictly increasing for $m \in [3/4,1]$. Therefore the proof of Theorem \ref{aediophantine} applies. To complete the proof we are left to verify that the parameter intervals $\bigl[\frac{1+k}{2+k}, \frac{2k^2+2k-1}{2k^2+2k}\bigr]$ cover the set $[2/3,1)$. We compare the right and left endpoints of consecutive intervals to check if they overlap.

\begin{equation*}
\frac{k+2}{k+3} \leq \frac{2k^2+2k-1}{2k^2+2k} \Leftrightarrow 0 \leq 2k^2+k-3
\end{equation*}
This relation holds for $k \geq 1$, therefore all the above parameter intervals overlap. Since both endpoints of the $k$'th interval converge to $1$ as $k$ goes to infinity, the statement of the theorem follows.
\end{proof}

\section*{Acknowledgements}
This research has been partially supported by OTKA (Hungarian National Fund for Scientific Research), grant K104745. The authors are grateful to Oliver Butterley, Ian Melbourne and Imre P\'eter T\'oth for valuable comments and enlightening discussions.

\section*{Appendix}

\setcounter{section}{1}
\setcounter{equation}{0}
\renewcommand{\thesection}{\Alph{section}}
\renewcommand{\theequation}{\Alph{section}.\arabic{equation}}

\begin{proof}[Proof of Lemma \ref{stableunstabledhc}]
It is enough to prove the part of the statement on the unstable direction, the stable part has the similar proof with $T$ replaced by $T^{-1}$. Let us first investigate the action of the tangent map $DT$ on unstable vectors. At every point the unstable cone is given by the union of the first and third quadrants of the plane (as shown in \cite{BBNV}). We first prove that the angle between two unstable vectors, lying in the same quadrant, is contracted by applying $DT$ on the vectors.
By \eqref{jacobian} it is clear that at every point $DT$ has the form $\bigl( \begin{smallmatrix} -1 & a \\ b & -1-ab \end{smallmatrix} \bigr)$ for some quantities $a, b < 0$. The image of a unit vector $(\cos \alpha, \sin \alpha)$ under the action of $DT$ is $(a \sin \alpha - \cos \alpha, b \cos \alpha - (1+ab)\sin \alpha)$. If $\alpha \in (0,\pi/2) \cup (\pi,3\pi/2)$, then the original vector is an unstable vector and so by uniform hyperbolicity there exists a constant $\Lambda > 1$ such that

\begin{equation}\label{expanded}
(a \sin \alpha - \cos \alpha)^2 + (b \cos \alpha - (1+ab)\sin \alpha)^2 \geq \Lambda > 1.
\end{equation}
The tangent map transforms the angles according to the function
\[ \alpha \to \arctan \Bigl(\frac{b\cos \alpha - (1+ab)\sin \alpha}{a\sin \alpha - \cos \alpha}\Bigr). \]
Differentiating this with respect to $\alpha$ many terms cancel out leaving
\[ \frac{1}{(a \sin \alpha - \cos \alpha)^2 + (b \cos \alpha - (1+ab)\sin \alpha)^2} \]
for the derivative, which is at most $1/\Lambda < 1$ for $\alpha \in (0,\pi/2) \cup (\pi,3\pi/2)$ using \eqref{expanded}. Therefore, by the mean value theorem we get

\begin{equation}\label{anglecontraction}
\angle(D_xT v_1, D_xT v_2) \leq \frac{1}{\Lambda}\angle(v_1, v_2),
\end{equation}
for all $x \in \mathcal{M}_1$ and $v_1, v_2 \in C_x^u$ with $\langle v_1, v_2\rangle > 0$. Since we can write the difference of two unit vectors $v_1$ and $v_2$ as $|v_1-v_2|^2 = 2(1-\cos \angle(v_1,v_2))$, using \eqref{anglecontraction} we get the formula

\begin{equation}\label{iterativecontraction}
\begin{split}
\biggl|\frac{D_xT v_1}{|D_xT v_1|}-\frac{D_xT v_2}{|D_xT v_2|}\biggr|^2 = \frac{1-\cos\angle(D_xT v_1,D_xT v_2)}{1-\cos\angle(v_1,v_2)} |v_1-v_2|^2 \leq \\ \leq \frac{1-\cos(\angle(v_1,v_2)/\Lambda)}{1-\cos\angle(v_1,v_2)} |v_1-v_2|^2 \leq \gamma^2 |v_1-v_2|^2,
\end{split}
\end{equation}
with some constant $\gamma \in (0,1)$, because $\frac{1-\cos(\alpha/\Lambda)}{1-\cos\alpha}$ is strictly less then $1$ for every $\alpha \in (0,\pi/2]$ and also in the limit as $\alpha \to 0^+$.
\\We now turn \eqref{iterativecontraction} into an iterative formula for the unstable direction. Given $u(x)$ and $u(y)$ we estimate the difference of the unstable directions in $T(x)$ and $T(y)$, assuming that $x$ and $y$ are in the same smoothness component of the dynamics. By the invariance of unstable manifolds we know that the unit vector tangent to the unstable manifold at $T(x)$ is nothing but $D_xT u(x)$ normalized. Hence we do the estimate

\begin{equation}\label{twotermunstable}
\biggl|\frac{D_xT u(x)}{|D_xT u(x)|}-\frac{D_yT u(y)}{|D_yT u(y)|}\biggr| \leq \biggl|\frac{D_xT u(x)}{|D_xT u(x)|}-\frac{D_yT u(x)}{|D_yT u(x)|}\biggr| + \biggl|\frac{D_yT u(x)}{|D_yT u(x)|}-\frac{D_yT u(y)}{|D_yT u(y)|}\biggr|.
\end{equation}
We remark that for this to be precise we identified the tangent spaces at $x$ and $y$. Also note that the unstable cone field for the system is constant, so after the identification both $u(x)$ and $u(y)$ can be viewed as unstable vectors in the same tangent space. Therefore, for the second term in \eqref{twotermunstable} we can apply \eqref{iterativecontraction} with $x$ replaced by $y$ and $v_1=u(x)$, $v_2=u(y)$.

For the first term we use our assumption of $x,y$ being in the same smoothness component and the assumption of Lemma \ref{stableunstabledhc} that this component is either $R_0$ or $R_1$. Within these circumstances we can use the piecewise $C^2$ property of the dynamics, in particular that there exists a constant $C_1>0$ such that
\[|D_xT v - D_yT v| \leq C_1 d(x,y),\]
for every unit vector $v$. Hence for the first term of \eqref{twotermunstable} we get

\begin{equation*}
\begin{split}
\biggl|\frac{D_xT u(x)}{|D_xT u(x)|}-\frac{D_yT u(x)}{|D_yT u(x)|}\biggr| & \leq \\ & \leq \frac{|D_xT u(x)-D_yT u(x)|}{|D_xT u(x)|} + \frac{||D_yT u(x)|-|D_xT u(x)||}{|D_xT u(x)|} \leq \\ & \leq \frac{2|D_xT u(x)-D_yT u(x)|}{|D_xT u(x)|} \leq \frac{2}{\Lambda} C_1 d(x,y).
\end{split}
\end{equation*}
This, together with our previous observation on the second term leads to

\begin{equation*}
\Bigl|\frac{D_xT u(x)}{|D_xT u(x)|}-\frac{D_yT u(y)}{|D_yT u(y)|}\Bigr| \leq \frac{2}{\Lambda} C_1 d(x,y) + \gamma |u(x)-u(y)|,
\end{equation*}
which can be iterated as long as the image of $x$ and $y$ are in the same smoothness component, i.e. as long as they are not separated by a singularity. This iteration results in

\begin{equation}\label{iteratedunstable}
\biggl|\frac{D_xT^n u(x)}{|D_xT^n u(x)|}-\frac{D_yT^n u(y)}{|D_yT^n u(y)|}\biggr| \leq \sum\limits_{i=0}^{n-1} \frac{2C_1}{\Lambda} \gamma^i d(T^i(x),T^i(y)) + \gamma^n |u(x)-u(y)|,
\end{equation}
for any $0 < n < s_+(x,y)$. To get an estimate on $|u(x)-u(y)|$ (as required in Lemma \eqref{stableunstabledhc}) we replace in \eqref{iteratedunstable} $x$ and $y$ by $T^{-n}(x)$ and $T^{-n}(y)$ respectively and get

\begin{equation}\label{finalunstable}
|u(x)-u(y)| \leq \sum\limits_{i=0}^{n-1} \frac{2C_1}{\Lambda} \gamma^i d(T^{i-n}(x),T^{i-n}(y)) + \gamma^n |u(T^{-n}(x))-u(T^{-n}(y))|.
\end{equation}
Now observe that
\begin{gather*}
s_+(T^{i-n}(x),T^{i-n}(y))=s_+(x,y)+n-i \\
s_-(T^{i-n}(x),T^{i-n}(y))=s_-(x,y)+i-n,
\end{gather*}
as long as $0 \leq n \leq s_-(x,y)$. Hence for all $i \in \{0, \dots, n-1\}$ we have
\[ s(T^{i-n}(x),T^{i-n}(y)) \geq \min\{s_+(x,y)+1,s_-(x,y)-n\}, \]
so choosing $n=s_-(x,y)/2$ the inequality
\[ s(T^{i-n}(x),T^{i-n}(y)) \geq \frac12 s(x,y) \]
holds. Therefore by \eqref{metricsequiv} still for every $i \in \{0, \dots, n-1\}$ we have
\[ d(T^{i-n}(x),T^{i-n}(y)) \leq C \theta^{\frac{s(x,y)}{2}} \]
and since $n \geq s(x,y)/2$ by our choice, we get the final estimate for \eqref{finalunstable}

\begin{equation*}
|u(x)-u(y)| \leq \frac{2C_1}{\Lambda} \frac{1}{1-\gamma} C \theta^{\frac{s(x,y)}{2}} + 2 \gamma^{\frac{s(x,y)}{2}},
\end{equation*}
proving the lemma with $\gamma_u = \max\{\sqrt{\theta},\sqrt{\gamma}\}$.
\end{proof}

\bibliographystyle{plain}

\end{document}